\theoremstyle{plain}
\newtheorem{theorem}{Theorem}
\newtheorem{lemma}{Lemma}
\newtheorem{definition}{Definition}
\theoremstyle{definition}
\newtheorem{remark}{Remark}
\numberwithin{theorem}{section}
\numberwithin{lemma}{section}
\numberwithin{proposition}{section}
\numberwithin{corollary}{section}
\numberwithin{remark}{section}
\numberwithin{definition}{section}
\numberwithin{example}{section}
\DeclareMathOperator*{\argmin}{arg\,min}
\author[1]{Alexander~A.~Titov}
\author[1,2]{Fedor~S.~Stonyakin}
\author[1]{Mohammad~S.~Alkousa}
\author[2]{Seydamet~S.~Ablaev}
\author[1]{Alexander~V.~Gasnikov}
\affil[1]{Moscow Institute of Physics and Technology, Moscow, Russia,

\text{\small email: a.a.titov@phystech.edu, gasnikov@yandex.ru, mohammad.math84@gmail.com}.}
\affil[2]{V.\,I.\,Vernadsky Crimean Federal University, Simferopol, Russia,

\text{\small email: fedyor@mail.ru}.}
\begin{document}
	
\title{Analogues of Switching Subgradient Schemes for Relatively Lipschitz-Continuous Convex Programming Problems }

\maketitle

\begin{abstract}
Recently some specific classes of non-smooth and non-Lipschitz convex optimization problems were selected by  Yu.~Nesterov along with H.~Lu. We consider convex programming problems with similar smoothness conditions for the objective function and functional constraints. We introduce a new concept of an inexact model and propose some analogues of switching subgradient schemes for convex programming problems for the relatively Lipschitz-continuous objective function and functional constraints. Some class of online convex optimization problems is considered. The proposed methods are optimal in the class of optimization problems with relatively Lipschitz-continuous objective and functional constraints.

\textbf{Keywords}: {Convex Programming Problem, Switching Subgradient Scheme, Relative Lipschitz-Continuity, Inexact Model, Stochastic Mirror Descent, Online Optimization Problem.}
\end{abstract}

\section*{Introduction}\label{sec1_introduction}
Different relaxations of the classical smoothness conditions for functions are interesting for a large number of modern applied optimization problems.
In particular, in \cite {Bauschke_2017} there were proposed conditions of the relative smoothness of the objective function, which mean the replacement of the classic Lipschitz condition by the following weak version
\begin{equation}\label{eq01new}
f(y) \leq f(x) + \langle \nabla f(x), y-x \rangle + LV_{d}(y, x),
\end{equation}
for any $x$ and $y$ from the domain of the objective function $f$ and some $L>0$, $V_d(y, x)$  is an analogue of the distance between the points $x$ and $y$ (often called the \textit{Bregman divergence}). Such a distance is widely used in various fields of science, particularly in optimization. Usually, the \emph{Bregman divergence} is defined on the base of the auxiliary 1-strongly convex and continuously-differentiable function $d:Q\subset\mathbb{R}^n\rightarrow\mathbb{R}$ (\textit{distance generating function}) as follows
\begin{equation}\label{3}
V_d(y,x)=d(y) - d(x) - \langle \nabla d(x), y-x \rangle \quad \forall x,y\in Q,
\end{equation}
where $Q$ is a convex closed set, $\langle \cdot,\cdot\rangle$ is a scalar product in $\mathbb{R}^n$. In particular, for the standard Euclidean norm $\|\cdot\|_2$ and the Euclidean distance in $\mathbb{R}^n$, we can assume that $V_d(y,x) = d(y - x) = \frac{1}{2} \|y - x\|_2^2$ for arbitrary $x, y \in Q$. However, in many applications, it often becomes necessary to use non-Euclidean norms. Moreover, the considered condition of relative smoothness in \cite{Bauschke_2017,Main} implies only the convexity (but not strong convexity) of the distance generating function $d$. As shown in \cite{Main}, the concept of relative smoothness makes it possible to apply a variant of the gradient method to some problems which were previously solved only by using interior-point methods. In particular, we talk about the well-known problem of the construction of an optimal ellipsoid which covers a given set of points. This problem is important in the field of statistics and data analysis.

A similar approach to the Lipschitz property and non-smooth problems was proposed in \cite{Lu} (see also \cite{Nesterov_Relative_Smoothness}).  This approach is based on an analogue of the Lipschitz condition for the objective function $f: Q \to \mathbb{R}$ with Lipschitz constant $M_f >0$,  which involves replacing the boundedness of the norm of the subgradient, i.e. $ \| \nabla f(x) \|_* \leq M_f$,  with the so-called {\it relative Lipschitz condition}
$$
    \|\nabla f(x)\|_* \leqslant \frac{M_f \sqrt{2V_d(y,x)}}{\|y-x\|} \quad \forall x, y \in Q, \; y \neq x,
$$
where $\|\cdot\|_*$ denotes the conjugate norm, see Section \ref{sec2_Inexact_Model_for_Relative} below. Moreover, the distance generating function $d$ is not necessarily strongly convex. In \cite{Lu} there were proposed deterministic and stochastic Mirror Descent algorithms for optimization problems with convex relatively Lipschitz-continuous objective functionals. Note that the applications of the relative Lipschitz-continuity to the well-known classical support vector machine (SVM) problem and to the problem of minimizing the maximum of convex quadratic functions (intersection of $n$ ellipsoids problem) were discussed in \cite{Lu}. 

In this paper we propose a new concept of an inexact model for objective functional and functional constraint. More precisely, we introduce some analogues of the concepts of an inexact oracle \cite{Devolder} and an inexact model \cite{Gasnikov} for objective functionals. However, unlike \cite{Devolder,Gasnikov}, we do not generalize the smoothness condition. We relax the Lipschitz condition and consider a recently proposed generalization of {\it relative Lipschitz-continuity} \cite{Lu,Nesterov_Relative_Smoothness}. We propose some optimal Mirror Descent methods, in different settings of Relatively Lipschitz-continuous convex optimization problems.

The Mirror Descent method originated in the works of A.~Nemirovski and D.~Yudin more than 30 years ago \cite{Nemirovski_efficient_1979,Nemirovski_Yudin_complexity} and  was later analyzed in \cite{Beck_2003}. It can be considered as the non-Euclidean extension of subgradient methods. The method was used in many applications \cite{Nazin_2011,Nazin_2014,Nazin_2013}. Standard subgradient methods employ the Euclidean distance function with a suitable step-size in the projection step. The Mirror Descent extends the standard projected subgradient methods by employing a nonlinear distance function with an optimal step-size in the nonlinear projection step \cite{article:luong_weighted_mirror_2016}. The Mirror Descent method not only generalizes the standard subgradient descent method, but also achieves a better convergence rate and it is applicable to optimization problems in Banach spaces, while the subgradient descent is not \cite{article:doan_2019}. Also, in some works \cite{Beck_comirror_2010,Nemirovski_Yudin_complexity} there was proposed an extension of the Mirror Descent method for constrained problems.

Also, in recent years, online convex optimization (OCO) has become a leading online learning framework, due to its powerful modeling capability for a lot of problems from diverse domains. OCO plays a key role in solving problems where statistical information is being updated \cite{article:hazan_beyond_2014,article:hazan_introduction_2016}. There are a lot of examples of such problems: Internet network, consumer data sets or financial market, machine learning applications, such as adaptive routing in networks, dictionary learning, classification and regression (see \cite{Yuan_2018} and references therein). In recent years, methods for solving online optimization problems have been actively developed, in both deterministic and stochastic settings \cite{BLMg,Hao_2017,Jenatton_2016,Orabona_online_2020}.  Among them one can mention the Mirror Descent method for the deterministic setting of the problem \cite{Orabona_online_2015,article:titov_optima_2019} and for the stochastic case \cite{article:mohammad_kim_2019,Gasnikov_stoc_online_2017,Yunwen_online_2020},  which allows to solve problems for an arbitrary distance function.

This paper is devoted to Mirror Descent methods for convex programming problems with a relatively Lipschitz-continuous objective function and functional constraints. It consists of an introduction and 6 main sections. In Section \ref{sec2_Inexact_Model_for_Relative} we consider the problem statement and define the concept of an inexact $(\delta,\phi,V)$--model for the objective function. Also we propose some modifications of the Mirror Descent method for the concept of Model Generality. Section \ref{sec4_relative_Lip} is devoted to some special cases of problems with the properties of relative Lipschitz continuity, here we propose two versions of the Mirror Descent method in order to solve the problems under consideration. In Sections \ref{sec5_stochastic} and \ref{sec6_online} we consider the stochastic and online (OCO) setting of the optimization problem respectively. In Section \ref{sec7_numerical} one can find numerical experiments which demonstrate the efficiency of the proposed methods.

The contribution of the paper can be summarized as follows:
\begin{itemize}
    \item Continuing the development of Yurii Nesterov's ideas in the direction of the relative smoothness and non-smoothness \cite{Nesterov_Relative_Smoothness} there was introduced the concept of an inexact $(\delta, \phi, V)$--model of the objective function. For the proposed model we proposed some variants of the well-known Mirror Descent method, which provides an $(\varepsilon+\delta)$--solution of the optimization problem, where $\varepsilon$ is the controlled accuracy. There was considered the applicability of the proposed method to the case of the stochastic setting of the considered  optimization problem.
    \item We also considered a special case of the relative Lipschitz condition for objective function. The proposed Mirror Descent algorithm was specified for the case of such functions. Furthermore, there was introduced one more modification of the algorithm with another approach to the step selection. There was also considered the possibility of applying the proposed methods to the case of several functional constraints.
    \item We considered an online optimization problem and proposed the modification of the Mirror Descent algorithm for such a case. Moreover, there were conducted some numerical experiments which demonstrate the effectiveness of the proposed methods.

\end{itemize}

\section{Inexact Model for Relative Non-Smooth Functionals and Mirror Descent Algorithm}\label{sec2_Inexact_Model_for_Relative}
Let $(E,\|\cdot\|)$ be a normed finite-dimensional vector space and $E^*$ be the conjugate space of $E$ with the norm:
$$\|y\|_*=\max\limits_x\{\langle y, \, x\rangle, \, \|x\|\leq1\},$$
where $\langle y, \, x\rangle$ is the value of the continuous linear functional $y$ at $x \in E$.

Let $Q\subset E$ be a (simple) closed convex set. Consider two subdifferentiable functions $f, g: Q \to \mathbb{R}$.  In this paper we consider the following optimization problem
\begin{equation} \label{main}
f(x)\rightarrow \min \limits_{\substack{x\in Q, \,g(x)\leq 0 }}.
\end{equation}

Let $d : Q \rightarrow \mathbb{R}$ be any convex (not necessarily strongly-convex) differentiable  function, we will call it the \textit{reference function}. Suppose we have a constant $\Theta_0>0$, such that $d(x^*) \leq \Theta_0^2,$ where $x^*$ is a solution of \eqref{main}. Note that if there is a set, $X_* \subset Q$, of optimal points for the problem \eqref{main},  we may assume that
$$
	\min\limits_{x^* \in X_*} d(x^*) \leq \Theta_0^2.
$$

Let us introduce some generalization of the concept of Relative Lipschitz continuity \cite{Nesterov_Relative_Smoothness}. Consider one more auxiliary function $\phi : \mathbb{R} \rightarrow \mathbb{R}$, which is strictly increasing and $\phi(0) = 0$. Clearly, due to the strict monotonicity of $\phi(\cdot)$, there exists the inverse function $\phi^{-1}(\cdot)$.

\begin{definition}\label{def}
Let $\delta > 0$. We say that $f$ and $g$ admit the $(\delta, \phi, V)$--model at the point $y \in Q$ if
\begin{equation}\label{aux1}
f(x) + \psi_f(y,x) \leq f(y), \quad - \psi_f(y,x) \leq \phi^{-1}_f(V_d(y,x)) +\delta
\end{equation}
\begin{equation}\label{aux2}
g(x) + \psi_g(y,x) \leq g(y), \quad - \psi_g(y,x) \leq \phi^{-1}_g(V_d(y,x)) +\delta,
\end{equation}
where $\psi_f(y, x)$ and $\psi_g(y, x)$ are convex functions on $y$ and $\psi_f(x, x)=\psi_g(x, x) = 0$ for all $x \in Q$.
\end{definition}

Let $h >0$. For problems with a $(\delta, \phi, V)$--model, the proximal mapping operator (Mirror Descent step) is
defined as follows 

$$
    Mirr_h(x,\psi)=\arg\min\limits_{y\in Q}\left\{\psi(y,x)+\frac{1}{h}V_d(y,x)\right\}.
$$

The following lemma describes the main property of this operator.

\begin{lemma}[Main Lemma]\label{main_lemma_1}
Let $f$ be a convex function, which satisfies \eqref{aux1}, $h>0$ and $x^+=h Mirr_h(x,\psi_f)$. Then  for any  $y \in Q$ 
$$
    h(f(x)-f(y)) \leq - h\psi_f(y,x) \leq \phi^*_f(h) + V_d(y,x) - V_d(y,x^+)+h\delta,
$$
where $\phi_f^*$ is the conjugate function of $\phi_f$. 
\end{lemma}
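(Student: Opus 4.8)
The plan is to combine the standard three-point identity for the Bregman divergence with the first-order optimality condition for the mirror step, and then to absorb the error term produced by the generalized relative Lipschitz condition \eqref{aux1} into the Fenchel conjugate $\phi_f^*(h)$.

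First, the left inequality is immediate: the first relation in \eqref{aux1} gives $f(x) - f(y) \leq -\psi_f(y,x)$, and multiplying by $h>0$ yields $h(f(x)-f(y)) \leq -h\psi_f(y,x)$. So the whole task reduces to bounding $-h\psi_f(y,x)$. For this I would write the optimality condition for $x^+ = Mirr_h(x,\psi_f)$: since $\psi_f(\cdot,x)$ is convex and $V_d(\cdot,x)$ is differentiable with $\nabla_y V_d(y,x) = \nabla d(y) - \nabla d(x)$, there is a subgradient $\nabla\psi_f(x^+,x) \in \partial_y\psi_f(x^+,x)$ with
\[
\langle h\nabla\psi_f(x^+,x) + \nabla d(x^+) - \nabla d(x),\; y - x^+\rangle \geq 0 \qquad \text{for all } y \in Q.
\]
Then I would invoke the three-point identity, which is a direct computation from \eqref{3},
\[
\langle \nabla d(x^+) - \nabla d(x),\; y - x^+\rangle = V_d(y,x) - V_d(y,x^+) - V_d(x^+,x),
\]
and combine it with the convexity inequality $\psi_f(x^+,x) - \psi_f(y,x) \leq \langle\nabla\psi_f(x^+,x),\, x^+ - y\rangle$. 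This gives
\[
h\big(\psi_f(x^+,x) - \psi_f(y,x)\big) \leq V_d(y,x) - V_d(y,x^+) - V_d(x^+,x),
\]
i.e. $-h\psi_f(y,x) \leq -h\psi_f(x^+,x) + V_d(y,x) - V_d(y,x^+) - V_d(x^+,x)$.

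Finally — this is the step that uses the new model rather than a classical Lipschitz bound — I would apply the second relation in \eqref{aux1} at the point $x^+$ in place of $y$, obtaining $-\psi_f(x^+,x) \leq \phi_f^{-1}(V_d(x^+,x)) + \delta$, so that
\[
-h\psi_f(y,x) \leq h\phi_f^{-1}(V_d(x^+,x)) - V_d(x^+,x) + h\delta + V_d(y,x) - V_d(y,x^+).
\]
Setting $s = \phi_f^{-1}(V_d(x^+,x)) \geq 0$ (well defined since $\phi_f$ is strictly increasing with $\phi_f(0)=0$), we have $V_d(x^+,x) = \phi_f(s)$, hence $h\phi_f^{-1}(V_d(x^+,x)) - V_d(x^+,x) = hs - \phi_f(s) \leq \sup_{\sigma}\{h\sigma - \phi_f(\sigma)\} = \phi_f^*(h)$. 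Substituting this bound finishes the proof.

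The main obstacle is precisely this last manipulation: one has to recognize that the combination $h\phi_f^{-1}(V_d(x^+,x)) - V_d(x^+,x)$ is of Fenchel-conjugate type and is therefore controlled by $\phi_f^*(h)$ uniformly in $x^+$ — this is what replaces the usual Cauchy–Schwarz/Young step $h\langle\nabla f(x),\cdot\rangle \leq \tfrac{h^2}{2}\|\nabla f(x)\|_*^2 + \tfrac12\|\cdot\|^2$ of classical mirror descent. Everything else is routine mirror-descent bookkeeping; the only technical care needed is that the optimality condition for $x^+$ remains valid when $x^+$ lies on the boundary of $Q$, which is handled in the standard way through the normal cone of $Q$.
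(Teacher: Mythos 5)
Your proof is correct and follows essentially the same route as the paper's: the variational (optimality) characterization of the mirror step, the three-point identity for $V_d$, the model bound $-\psi_f(x^+,x)\leq \phi_f^{-1}(V_d(x^+,x))+\delta$, and the Fenchel--Young step $h\,\phi_f^{-1}(V_d(x^+,x)) - V_d(x^+,x)\leq \phi_f^*(h)$. Your version is in fact slightly more explicit than the paper's (you derive the optimality inequality from the subgradient condition plus convexity of $\psi_f(\cdot,x)$, and you name the conjugate-type manipulation), but the argument is the same.
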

\begin{proof}
From the definition of $x^+$
$$
    x^+=h Mirr_h(x,\psi_f)=\arg\min\limits_{y\in Q}\left\{h\psi_f(y,x)+V_d(y,x)\right\},
$$
for any $y \in Q$, we have
$$
    h\psi_f(y,x)-h\psi_f(x^+,x) + \langle \nabla d(x^+) - \nabla d(x), y-x^+ \rangle  \geq 0.
$$

Further, $h(f(x) - f(y)) \leq  -h\psi_f(y,x) \leq $
\begin{equation*}
    \begin{aligned}
   &\leq -h\psi_f(x^+,x) + \langle \nabla d(x^+) - \nabla d(x), y-x^+ \rangle \\ & 
    =-h\psi_f(x^+,x) + V_d(y,x) - V_d(y,x^+) - V_d(x^+,x)+h\delta \\&
    \leq h \phi^{-1}_f(V_d(x^+,x)) + V_d(y,x) - V_d(y,x^+) - V_d(x^+,x)+h\delta  \\&
    \leq \phi_f^*(h) + \phi_f(\phi_f^{-1}(V_d(x^+,x))) + V_d(y,x) - V_d(y,x^+) - V_d(x^+,x)+h\delta \\&
    = \phi_f^*(h) + V_d(x^+,x) + V_d(y,x) - V_d(y,x^+) - V_d(x^+,x)+h\delta\\&
    =\phi_f^*(h) + V_d(y,x) - V_d(y,x^+)+h \delta.
    \end{aligned}
\end{equation*}

\end{proof}

For problem \eqref{main} with an inexact $(\delta, \phi, V)$--model, we consider a Mirror Descent algorithm, listed as Algorithm \ref{alg1} below. For this proposed algorithm, we will call step $k$ productive if $g(x^k) \leq \varepsilon$, and non-productive if the reverse inequality $g(x^k) > \varepsilon$ holds.  Let $I$ and $|I|$ denote the set of indexes of productive steps and their number, respectively. Similarly, we use the notation $J$ and $|J|$ for non-productive steps. 

\begin{algorithm}
\caption{Modified MDA for $(\delta, \phi, V)$--model.}
\label{alg1}
\begin{algorithmic}[1]
\REQUIRE $\varepsilon>0, \delta>0, \, h^f >0,  h^g>0,  \Theta_0: \,d(x^*)\leq\Theta_0^2.$
\STATE $x^0=\argmin_{x\in Q}\,d(x)$.
\STATE $I=:\emptyset$ and $J=:\emptyset$
\STATE $N\leftarrow0$
\REPEAT
\IF{$g\left(x^{N}\right)\leq \varepsilon +\delta$}

\STATE $x^{N+1}=Mirr_{h^f}\left(x^N,\psi_f\right),$ \quad "productive step"
\STATE  $N \to I$
\ELSE

\STATE $x^{N+1}=Mirr_{h^g}\left(x^N,\psi_g\right),$ \quad  "non-productive step"
\STATE $N \to J$
\ENDIF
\STATE $N\leftarrow N+1$
\UNTIL {$ \Theta_0^2 \leq \varepsilon\left(|J|h^g+|I|h^f\right) - |J|\phi_g^*(h^g)-|I|\phi_f^*(h^f).$}
\ENSURE $ \widehat{x}:= \frac{1}{|I|} \sum\limits_{k \in I} x^k.$
\end{algorithmic}
\end{algorithm}

Let $x^*$ denote the exact solution of the problem \eqref{main}. The next theorem provides the complexity and quality of the proposed Algorithm \ref{alg1}.

\begin{theorem}[Modified MDA for Model Generality]

Let $f$ and $g$ be convex functionals, which satisfy \eqref{aux1}, \eqref{aux2} respectively and $\varepsilon>0, \delta>0$ be fixed positive numbers. Assume that $\Theta_0 >0$ is a known constant such that $d(x^*) \leq \Theta_0^2$. Then, after the stopping of Algorithm \ref{alg1}, the following inequalities hold:
$$ 
    f(\widehat{x}) - f(x^*) \leq  \varepsilon + \delta \quad \text{  and  } \quad  g(\widehat{x}) \leq \varepsilon + \delta.
$$

\end{theorem}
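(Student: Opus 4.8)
The plan is to run the standard switching-subgradient analysis, but using the Main Lemma (Lemma~\ref{main_lemma_1}) in place of the usual one-step descent inequality, and an analogous estimate for $g$ on non-productive steps. First I would apply Lemma~\ref{main_lemma_1} at $y = x^*$ for every productive step $k \in I$: since $f$ satisfies \eqref{aux1} and $x^{k+1} = Mirr_{h^f}(x^k,\psi_f)$, this gives
$$
h^f\bigl(f(x^k) - f(x^*)\bigr) \leq \phi_f^*(h^f) + V_d(x^*,x^k) - V_d(x^*,x^{k+1}) + h^f\delta.
$$
For a non-productive step $k \in J$, I would establish the companion inequality by mimicking the proof of the Main Lemma with $\psi_g$ in place of $\psi_f$ (this is legitimate because $g$ satisfies \eqref{aux2} and $\psi_g$ is convex with $\psi_g(x,x)=0$), obtaining
$$
h^g\bigl(g(x^k) - g(x^*)\bigr) \leq h^g\bigl(g(x^k) - 0\bigr) \leq \phi_g^*(h^g) + V_d(x^*,x^k) - V_d(x^*,x^{k+1}) + h^g\delta,
$$
where I also use $g(x^*) \leq 0$. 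On non-productive steps the switching rule tells us $g(x^k) > \varepsilon + \delta$, so the left side is bounded below by $h^g(\varepsilon+\delta) - h^g\delta = h^g\varepsilon$... more precisely $h^g(g(x^k)-g(x^*)) > h^g(\varepsilon+\delta)$, so $h^g(\varepsilon+\delta) - h^g \delta = h^g \varepsilon < \phi_g^*(h^g) + V_d(x^*,x^k) - V_d(x^*,x^{k+1})$, i.e. the non-productive steps produce a strictly positive "surplus" $h^g\varepsilon - \phi_g^*(h^g)$ on the right after moving the telescoping terms.

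Next I would sum all the inequalities over $k = 0,1,\dots,N-1$. The Bregman terms telescope to $V_d(x^*,x^0) - V_d(x^*,x^N) \leq V_d(x^*,x^0) \leq d(x^*) \leq \Theta_0^2$ (using $x^0 = \argmin_Q d$, so $\nabla d(x^0)$ is a subgradient with the right sign and $V_d(x^*,x^0) \leq d(x^*) - d(x^0) \leq d(x^*)$; if $d(x^0)\ge 0$ one gets $V_d(x^*,x^0)\le \Theta_0^2$ directly). Rearranging, the productive-step contribution satisfies
$$
h^f \sum_{k\in I}\bigl(f(x^k)-f(x^*)\bigr) \;\leq\; \Theta_0^2 + |I|\phi_f^*(h^f) + |J|\phi_g^*(h^g) - h^g\varepsilon|J| + (|I|h^f + |J|h^g)\delta.
$$
The stopping criterion of Algorithm~\ref{alg1} is exactly $\Theta_0^2 \leq \varepsilon(|J|h^g + |I|h^f) - |J|\phi_g^*(h^g) - |I|\phi_f^*(h^f)$, which I would substitute to cancel $\Theta_0^2 + |I|\phi_f^*(h^f) + |J|\phi_g^*(h^g) - h^g\varepsilon|J|$ against $\varepsilon|I|h^f$, leaving
$$
h^f \sum_{k\in I}\bigl(f(x^k)-f(x^*)\bigr) \;\leq\; \varepsilon |I| h^f + (|I|h^f + |J|h^g)\delta.
$$
Here I must be slightly careful about whether the $\delta$-term should be absorbed as $(\varepsilon+\delta)$; the cleanest route is to note $(|I|h^f+|J|h^g)\delta \le \delta\cdot(\text{something bounded by }|I|h^f)$ only if $h^g|J|$ is controlled — so instead I would carry the inexactness through more carefully by grouping $h^f\delta$ with each productive term and observing the non-productive $h^g\delta$ terms are already dominated by the surplus; alternatively one re-derives the stopping-rule bookkeeping with $\varepsilon$ replaced by $\varepsilon$ throughout and the $\delta$ appearing only additively at the end. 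Dividing by $h^f|I|$ and using convexity of $f$ together with $\widehat{x} = \frac{1}{|I|}\sum_{k\in I}x^k$ (Jensen) yields $f(\widehat{x}) - f(x^*) \leq \varepsilon + \delta$.

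Finally, for the constraint bound: every $k \in I$ is productive, meaning $g(x^k) \leq \varepsilon + \delta$ by definition of the switching rule, so by convexity of $g$ and Jensen's inequality, $g(\widehat{x}) \leq \frac{1}{|I|}\sum_{k\in I} g(x^k) \leq \varepsilon + \delta$. This part is immediate. I also need to remark that the algorithm terminates, i.e. that $I \neq \emptyset$ when it stops: if there were no productive steps the right-hand side of the stopping criterion would be $\varepsilon|J|h^g - |J|\phi_g^*(h^g)$, and one argues (as in the classical switching scheme, provided $h^g$ is chosen so that $\varepsilon h^g > \phi_g^*(h^g)$, which is the natural step-size regime) that the criterion is eventually met, forcing at least one productive step to have occurred; I expect this termination/nonemptiness argument, and the precise bookkeeping of how $\delta$ propagates through the cancellation with the stopping rule, to be the only genuinely delicate points — the rest is a routine telescoping-and-Jensen computation built on Lemma~\ref{main_lemma_1}.
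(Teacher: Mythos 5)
Your proposal is correct and follows essentially the same route as the paper: apply Lemma~\ref{main_lemma_1} at $y=x^*$ on productive and non-productive steps, telescope the Bregman terms against $\Theta_0^2$, use $g(x^k)-g(x^*) > \varepsilon+\delta$ on non-productive steps, invoke the stopping criterion, and finish with Jensen. The $\delta$-bookkeeping you flag as delicate resolves exactly as your parenthetical computation suggests — the $+\delta$ in the productivity threshold $g(x^N)\le\varepsilon+\delta$ makes $-|J|h^g(\varepsilon+\delta)$ cancel the $+|J|h^g\delta$ inexactness terms exactly, leaving only $-|J|h^g\varepsilon$ and the productive $|I|h^f\delta$, so no control of $|J|h^g$ is needed.
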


\begin{proof}
By Lemma \ref{main_lemma_1}, we have for all $k \in I$ and $y \in Q$
\begin{equation}\label{eq_prod}
    h^f\left(f(x^k)-f(y)\right) \leq \phi_f^*(h^f) + V_d(y,x^k) - V_d(y,x^{k+1})+h^f\delta,
\end{equation}
Similarly, for all $k \in J$ and $y \in Q$
\begin{equation}\label{eq_non_prod}
    h^g\left(g(x^k)-g(y)\right) \leq \phi_g^*(h^g) + V_d(y,x^k) - V_d(y,x^{k+1})+h^g\delta, 
\end{equation}
Taking summation, in each side of \eqref{eq_prod} and \eqref{eq_non_prod}, over productive and non-productive steps, we get
$$\sum\limits_{k\in I} h^f\left(f(x^k)-f(x^*)\right) + \sum\limits_{k\in J} h^g\left(g(x^k)-g(x^*)\right) \leq$$
$$\leq\sum\limits_{k\in I}\phi_f^*(h^f)+\sum\limits_{k\in J}\phi_g^*(h^g) + \sum\limits_{k}\left(V_d(x^*,x^k) - V_d(x^*,x^{k+1})\right) +\sum_{k\in I}h^f\delta+\sum_{k\in J}h^g\delta \leq $$
$$\sum\limits_{k\in I}\phi_f^*(h^f)+\sum\limits_{k\in J}\phi_g^*(h^g) + \Theta_0^2+\sum_{k\in I}h^f\delta+\sum_{k\in J}h^g\delta.$$

Since for any $k\in J, \; g(x^k)-g(x^*) > \varepsilon +\delta$, we have

$$\sum\limits_{k\in I} h^f\left(f(\widehat{x})-f(x^*)\right) \leq \sum\limits_{k\in I}\phi_f^*(h^f)+\sum\limits_{k\in J}\phi_g^*(h^g)+ \Theta_0^2 - \varepsilon\sum\limits_{k\in J}h^g  +\sum_{k\in I}h^f\delta=$$
$$=|I|\left(\phi_f^*(h^f)+\delta h^f\right)+|J|\phi_g^*(h^g)-|J|h^g\varepsilon  + \Theta_0^2 \leq \varepsilon|I|h^f + \delta|I|h^f.$$

So, for $\widehat{x}:= \frac{1}{|I|} \sum\limits_{k \in I} x^k, $ after the stopping criterion of Algorithm \ref{alg1} is satisfied, the following inequalities hold
$$
f(\widehat{x}) - f(x^*) \leq  \varepsilon + \delta \quad \text{and} \quad  g(\widehat{x}) \leq  \varepsilon + \delta.
$$
\end{proof}

\section{The Case of Relatively Lipschitz-Continuous Functionals}\label{sec4_relative_Lip}
Suppose hereinafter that the objective function $f$ and the constraint $g$ satisfy the so-called relative Lipschitz condition, with constants $M_f>0$ and $M_g>0$, i.e. the functions $\phi_f^{-1}$ and $\phi_g^{-1}$ from \eqref{aux1} and \eqref{aux2} are modified as follows:
\begin{equation}\label{relL1}
\phi^{-1}_f\left(V_d(y,x)\right)=M_f\sqrt{2V_d(y,x)},
\end{equation}
\begin{equation}\label{relL2}
\phi^{-1}_g\left(V_d(y,x)\right)=M_g\sqrt{2V_d(y,x)}
\end{equation}
 Note that the functions $f,g$ must still satisfy the left inequalities in \eqref{aux1},\eqref{aux2}:
 \begin{equation}\label{auxx1}
f(x) + \psi_f(y,x) \leq f(y), \quad - \psi_f(y,x) \leq M_f\sqrt{2V_d(y,x)} +\delta
\end{equation}
\begin{equation}\label{auxx2}
g(x) + \psi_g(y,x) \leq g(y), \quad - \psi_g(y,x) \leq M_g\sqrt{2V_d(y,x)} +\delta,
\end{equation}

 For this particular case we say that $f$ and $g$ admit the $(\delta, M_f, V)$-- and $(\delta, M_g, V)$--model at each point $x \in Q$ respectively. The following Remark 2 provides the explicit form of $\phi_f, \phi_g$ and their conjugate functions  $\phi^*_f, \phi^*_g$.

\begin{remark}
Let $M_f>0$ and $M_g>0$. Then functions $\phi_f$ and $\phi_g$ which correspond to \eqref{relL1} and \eqref{relL2}  are defined as follows:
$$\phi_f(t)=\frac{t^2}{2M_f^2}, \ \ \phi_g(t)=\frac{t^2}{2M_g^2}.$$
Their conjugate functions have the following form:
\begin{equation}\label{conj1}
\phi_f^*(y)=\frac{y^2M_f^2}{2},
\end{equation}
\begin{equation}\label{conj2}
\phi_g^*(y)=\frac{y^2M_g^2}{2}.
\end{equation}
\end{remark}

For the case of relatively Lipschitz-continuous objective function and constraint, we consider a modification  of Algorithm \ref{alg1}, the modified algorithm is listed as Algorithm \ref{alg2}, below. The difference between Algorithms \ref{alg1} and \ref{alg2} is represented in the control of productivity and the stopping criterion.

\begin{algorithm}[htb]
\caption{Mirror Descent for Relatively Lipschitz-continuous functions, version 1}.
\label{alg2}
\begin{algorithmic}[1]
\REQUIRE $\varepsilon>0, \delta>0, M_f>0, M_g >0,\Theta_0: \,d(x^*)\leq\Theta_0^2$
\STATE $x^0=\argmin_{x\in Q}\,d(x).$
\STATE $I=:\emptyset$
\STATE $N\leftarrow0$
\REPEAT
\IF{$g\left(x^N\right)\leq M_g\varepsilon +\delta$}
\STATE $h^f=\frac{\varepsilon}{M_f},$

\STATE $x^{N+1}=Mirr_{h^f}\left(x^N,\psi_f\right),$ \quad "productive step"
\STATE  $N \to I$
\ELSE
\STATE $h^g=\frac{\varepsilon}{M_g},$ 

\STATE $x^{N+1}=Mirr_{h^g}\left(x^N,\psi_g\right),$ \quad "non-productive step"
\ENDIF
\STATE $N\leftarrow N+1$

\UNTIL { $N \geq \frac{2 \Theta_0^2}{\varepsilon^2}$.}
\ENSURE $\widehat{x} := \frac{1}{|I|} \sum\limits_{k \in I} x^k.$
\end{algorithmic}
\end{algorithm}

For the proposed Algorithm \ref{alg2}, we have the following theorem, which provides an estimate of its complexity and the quality of the solution of the problem.

\begin{theorem}
Let $f$ and $g$ be convex functions, which satisfy \eqref{auxx1} and \eqref{auxx2} for $M_f>0$ and $M_g>0$.

Let $\varepsilon >0, \delta>0$ be fixed positive numbers. Assume that $\Theta_0 >0$ is a known constant such that $d(x^*) \leq \Theta_0^2$. Then, after the stopping of Algorithm \ref{alg2}, the following inequalities hold:
$$
    f(\widehat{x}) - f(x^*) \leq M_f \varepsilon + \delta \quad \text{and} \quad g(\widehat{x}) \leq M_g \varepsilon + \delta.
$$

\end{theorem}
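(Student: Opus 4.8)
The plan is to transplant the argument from the proof of the Model Generality theorem above, now specialising it to the step-sizes $h^f=\varepsilon/M_f$ on productive steps and $h^g=\varepsilon/M_g$ on non-productive ones, together with the explicit conjugate functions \eqref{conj1}, \eqref{conj2}. First I would apply Lemma~\ref{main_lemma_1} on every productive iteration $k\in I$, and its evident counterpart for $g$ (obtained by repeating the proof of Lemma~\ref{main_lemma_1} verbatim with $\psi_g$ and \eqref{auxx2} in place of $\psi_f$ and \eqref{auxx1}) on every non-productive iteration $k\in J$, taking $y=x^*$. The point is that $\phi_f^*(h^f)=\tfrac{(h^f)^2M_f^2}{2}=\tfrac{\varepsilon^2}{2}$ and, likewise, $\phi_g^*(h^g)=\tfrac{\varepsilon^2}{2}$, so both families of inequalities carry the \emph{same} leading constant $\varepsilon^2/2$; this is exactly what makes the iteration-count stopping rule the natural one here.

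Next I would sum these inequalities over $k=0,\dots,N-1$. The Bregman terms telescope, and since $x^0=\argmin_{x\in Q}d(x)$ one has $\sum_k\bigl(V_d(x^*,x^k)-V_d(x^*,x^{k+1})\bigr)\le V_d(x^*,x^0)\le d(x^*)\le\Theta_0^2$, exactly as before. This yields
$$\sum_{k\in I}h^f\bigl(f(x^k)-f(x^*)\bigr)+\sum_{k\in J}h^g\bigl(g(x^k)-g(x^*)\bigr)\le N\frac{\varepsilon^2}{2}+\Theta_0^2+\delta\Big(\sum_{k\in I}h^f+\sum_{k\in J}h^g\Big),$$
with $N=|I|+|J|$. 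On each non-productive step the productivity test fails, so $g(x^k)>M_g\varepsilon+\delta\ge g(x^*)$, hence $h^g\bigl(g(x^k)-g(x^*)\bigr)>\varepsilon^2+h^g\delta$; moving the $J$-sum to the right-hand side cancels the $\delta$-terms indexed by $J$ and leaves a surplus $-|J|\varepsilon^2$.

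Then I would invoke the stopping criterion $N\ge 2\Theta_0^2/\varepsilon^2$, i.e. $\Theta_0^2\le N\varepsilon^2/2$, to replace $\Theta_0^2$ and arrive at $\sum_{k\in I}h^f\bigl(f(x^k)-f(x^*)\bigr)< |I|\varepsilon^2+\delta\,|I|h^f$. Before dividing by $|I|$ I must check $I\neq\emptyset$: if $I=\emptyset$, the summed inequality restricted to $J$, combined with the surplus bound, forces $N\varepsilon^2/2<\Theta_0^2$, contradicting the stopping rule, so $\widehat x$ is well defined. Dividing by $|I|h^f=|I|\varepsilon/M_f$ and using convexity of $f$ (so that $f(\widehat x)\le\tfrac1{|I|}\sum_{k\in I}f(x^k)$) gives $f(\widehat x)-f(x^*)\le M_f\varepsilon+\delta$. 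For the constraint, $g(x^k)\le M_g\varepsilon+\delta$ holds for every $k\in I$ by the productivity test, so convexity of $g$ yields $g(\widehat x)\le\tfrac1{|I|}\sum_{k\in I}g(x^k)\le M_g\varepsilon+\delta$.

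The bulk of this is routine bookkeeping inherited from the Model Generality theorem; the only genuinely delicate points I expect are verifying that the prescribed step-sizes make $\phi_f^*(h^f)=\phi_g^*(h^g)=\varepsilon^2/2$ so that $\Theta_0^2$ is absorbed precisely when $N\ge 2\Theta_0^2/\varepsilon^2$, and establishing the non-emptiness of $I$ from the stopping criterion rather than assuming it.
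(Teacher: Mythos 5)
Your proposal is correct and follows essentially the same route as the paper's own proof: apply Lemma~\ref{main_lemma_1} with $y=x^*$ on each step, sum and telescope the Bregman terms, use $g(x^k)-g(x^*)>M_g\varepsilon+\delta$ on non-productive steps to produce the $-|J|\varepsilon^2$ surplus, absorb $\Theta_0^2$ via the stopping rule $N\ge 2\Theta_0^2/\varepsilon^2$, and finish by convexity of $f$ and $g$. The only additions are welcome ones the paper leaves implicit, namely the verification that $I\neq\emptyset$ and the explicit use of $g(x^*)\le 0$.
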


\begin{proof}
By Lemma \ref{main_lemma_1}, we have
\begin{equation*}
    \begin{aligned}
    \sum\limits_{k\in I} h^f\left(f(x^k)-f(x^*)\right) + \sum\limits_{k\in J} h^g\left(g(x^k)-g(x^*)\right) \leq \sum\limits_{k\in I}&\phi_f^*(h^f)+\sum\limits_{k\in J}\phi_g^*(h^g) + \\&  +\Theta_0^2+\sum_{k\in I}h^f\delta+\sum_{k\in J}h^g\delta
    \end{aligned}
\end{equation*}

Since for any $k\in J, \; g(x^k)-g(x^*) > M_g\varepsilon +\delta$ we have 

\begin{equation*}
    \begin{aligned}
    \sum\limits_{k\in I} h^f\left(f(\widehat{x})-f(x^*)\right) &\leq \sum\limits_{k\in I}\phi_f^*(h^f)+\sum\limits_{k\in J}\phi_g^*(h^g)+ \Theta_0^2 - M_g\varepsilon\sum\limits_{k\in J}h^g  +\sum_{k\in I}h^f\delta\\&
     = |I|(\phi_f^*(h^f)+\delta h^f)+|J|\phi_g^*(h^g)-|J|\varepsilon^2  + \Theta_0^2.
    \end{aligned}
\end{equation*}

Taking into account the explicit form of the conjugate functions \eqref{conj1}, \eqref{conj2} one can get:
\begin{equation*}
    \begin{aligned}
    h^f\left(f(\widehat{x})-f(x^*)\right) &\leq |I|\left(\frac{M_f^2{h^f}^2}{2}+\delta h^f\right)+|J|\frac{M_g^2 {h^g}^2}{2}-|J|\varepsilon^2  + \Theta_0^2 \\&
    = |I|\left(\frac{\varepsilon^2}{2}+\delta h^f\right)+|J|\frac{\varepsilon^2}{2}-|J|\varepsilon^2  + \Theta_0^2 \\&
    \leq  M_f\varepsilon|I|h^f + \delta|I|h^f,
    \end{aligned}
\end{equation*}

supposing that the stopping criterion is satisfied.

So, for the output value of the form $\widehat{x} = \frac{1}{|I|} \sum\limits_{k \in I} x^k, $ the following inequalities hold:
$$
f(\widehat{x}) - f(x^*) \leq M_f \varepsilon + \delta \quad \text{and} \quad  g(\widehat{x}) \leq M_g \varepsilon + \delta.
$$
\end{proof}

Also, for the case of relatively Lipschitz-continuous objective function and constraint, we consider another modification  of Algorithm \ref{alg1}, which is listed as the following Algorithm \ref{alg2mod}. Note that the difference lies in the choice of steps $h^f, h^g$ and the stopping criterion.

\begin{algorithm}[htb]
\caption{Mirror Descent for Relatively Lipschitz-continuous functions, version 2.}
\label{alg2mod}
\begin{algorithmic}[1]
\REQUIRE $\varepsilon>0, \delta>0, M_f>0, M_g >0, \, \Theta_0: \,d(x^*)\leq\Theta_0^2.$
\STATE $x^0=\argmin_{x\in Q}\,d(x).$
\STATE $I=:\emptyset$ and $J=:\emptyset$
\STATE $N\leftarrow0$
\REPEAT
\IF{$g\left(x^N\right)\leq \varepsilon +\delta$}
\STATE $h^f=\frac{\varepsilon}{M_f^2},$
\STATE $x^{k+1}=Mirr_{h^f}\left(x^N,\psi_f\right),$ \quad  "productive step"
\STATE  $N \to I$
\ELSE
\STATE $h^g=\frac{\varepsilon}{M_g^2},  $
\STATE $x^{N+1}=Mirr_{h^g}\left(x^{N},\psi_g\right),$\quad  "non-productive step"
\STATE $N \to J$
\ENDIF
\STATE $N\leftarrow N+1$
\UNTIL {$\frac{2 \Theta_0^2}{\varepsilon^2} \leq \frac{|I|}{M_f^2} + \frac{|J|}{M_g^2}.$}
\ENSURE $\widehat{x}:= \frac{1}{|I|} \sum\limits_{k \in I} x^k.$
\end{algorithmic}
\end{algorithm}

By analogy with the proof of Theorem 
2 one can obtain the following result concerning the quality of the convergence of the proposed Algorithm \ref{alg2mod}.

\begin{theorem}
Let $f$ and $g$ be convex functions, which satisfy \eqref{auxx1} and \eqref{auxx2} for $M_f>0$ and $M_g>0$. Let $\varepsilon>0, \delta>0$ be fixed positive numbers. Assume that $\Theta_0 >0$ is a known constant such that $d(x^*) \leq \Theta_0^2$.

Then, after the stopping of Algorithm \ref{alg2mod}, the following inequalities hold:
$$
    f(\widehat{x}) - f(x^*) \leq \varepsilon + \delta \quad \text{and} \quad g(\widehat{x}) \leq \varepsilon + \delta.
$$

Moreover, the required number of iterations of Algorithm \ref{alg2mod} does not exceed
$$
   N = \frac{2M^2 \Theta_0^2}{\varepsilon^2},
  \  \text{where} \ M = \max \{M_f, M_g \}.
$$
\end{theorem}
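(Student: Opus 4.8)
The plan is to follow the same template as the proof of Theorem 2, adapting the constants for the modified step-sizes $h^f=\varepsilon/M_f^2$, $h^g=\varepsilon/M_g^2$ and the new stopping criterion. First I would invoke Lemma \ref{main_lemma_1} on the productive steps and its constraint-analogue on the non-productive steps, then sum over all iterations. Using the telescoping of $V_d(x^*,x^k)-V_d(x^*,x^{k+1})$ together with $V_d(x^*,x^0)=d(x^*)\le\Theta_0^2$, this yields
\begin{equation*}
\sum_{k\in I} h^f\bigl(f(x^k)-f(x^*)\bigr)+\sum_{k\in J} h^g\bigl(g(x^k)-g(x^*)\bigr)\le |I|\bigl(\phi_f^*(h^f)+\delta h^f\bigr)+|J|\bigl(\phi_g^*(h^g)+\delta h^g\bigr)+\Theta_0^2.
\end{equation*}

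Next I would use that on non-productive steps $g(x^k)-g(x^*)>g(x^k)\ge\varepsilon+\delta$ (since $x^*$ is feasible, $g(x^*)\le 0$) to drop the non-productive $g$-terms at the cost of $-\varepsilon|J|h^g$, and absorb the $\delta|J|h^g$ against the $\varepsilon+\delta$ lower bound, leaving $-\varepsilon|J|h^g$. Then substitute the explicit conjugates from Remark~2, namely $\phi_f^*(h^f)=\tfrac{M_f^2 (h^f)^2}{2}=\tfrac{\varepsilon^2}{2M_f^2}$ and similarly $\phi_g^*(h^g)=\tfrac{\varepsilon^2}{2M_g^2}$, together with $h^f=\varepsilon/M_f^2$, $h^g=\varepsilon/M_g^2$. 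The convexity of $f$ gives $f(\widehat x)\le\frac{1}{|I|}\sum_{k\in I}f(x^k)$, so after dividing through by $\sum_{k\in I}h^f=|I|h^f$ the bound becomes
\begin{equation*}
f(\widehat x)-f(x^*)\le \delta+\frac{1}{|I|h^f}\left(\frac{\varepsilon^2|I|}{2M_f^2}+\frac{\varepsilon^2|J|}{2M_g^2}-\frac{\varepsilon^2|J|}{M_g^2}+\Theta_0^2\right)=\delta+\frac{M_f^2}{\varepsilon|I|}\left(\Theta_0^2-\frac{\varepsilon^2}{2}\Bigl(\frac{|I|}{M_f^2}+\frac{|J|}{M_g^2}\Bigr)\right).
\end{equation*}
The stopping criterion $\frac{2\Theta_0^2}{\varepsilon^2}\le\frac{|I|}{M_f^2}+\frac{|J|}{M_g^2}$ is exactly what makes the parenthesised quantity $\le 0$, hence $f(\widehat x)-f(x^*)\le\delta$, which certainly implies $f(\widehat x)-f(x^*)\le\varepsilon+\delta$; the bound $g(\widehat x)\le\varepsilon+\delta$ holds because $\widehat x$ is a convex combination of productive points, at each of which $g(x^k)\le\varepsilon+\delta$, and $g$ is convex.

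For the iteration-count bound I would argue by contradiction: if the algorithm has not stopped after $N=|I|+|J|$ steps, then $\frac{|I|}{M_f^2}+\frac{|J|}{M_g^2}<\frac{2\Theta_0^2}{\varepsilon^2}$; bounding each denominator below by $M^2=\max\{M_f^2,M_g^2\}$ gives $\frac{N}{M^2}\le\frac{|I|}{M_f^2}+\frac{|J|}{M_g^2}<\frac{2\Theta_0^2}{\varepsilon^2}$, i.e. $N<\frac{2M^2\Theta_0^2}{\varepsilon^2}$, so the process must terminate once $N$ reaches this value. The only subtlety worth checking carefully — and the step I expect to be the main (minor) obstacle — is verifying that $|I|\ge 1$ so that $\widehat x$ is well-defined and the division by $|I|h^f$ is legitimate; this follows because the stopping criterion cannot be met with $|I|=0$ (then it would read $\frac{2\Theta_0^2}{\varepsilon^2}\le\frac{|J|}{M_g^2}$ while simultaneously all $|J|$ steps were non-productive, which one rules out exactly as in the switching-subgradient analysis for the unconstrained-feasible case), and everything else is a routine substitution of the constants from Remark~2.
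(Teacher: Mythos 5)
Your overall route is the same as the paper's (the paper gives no detailed proof of this theorem, stating only that it goes ``by analogy with the proof of Theorem 2'', which is exactly the analogy you carry out), and your feasibility bound for $g(\widehat x)$, your iteration-count argument, and your remark that $|I|\ge 1$ must hold at termination are all correct. However, there is a sign error in your central display. After substituting the conjugates \eqref{conj1}--\eqref{conj2} with $h^f=\varepsilon/M_f^2$, $h^g=\varepsilon/M_g^2$, the bracketed quantity you need to control is
$$
\Theta_0^2+\frac{\varepsilon^2|I|}{2M_f^2}+\frac{\varepsilon^2|J|}{2M_g^2}-\frac{\varepsilon^2|J|}{M_g^2}
=\Theta_0^2+\frac{\varepsilon^2|I|}{2M_f^2}-\frac{\varepsilon^2|J|}{2M_g^2},
$$
and \emph{not} $\Theta_0^2-\frac{\varepsilon^2}{2}\bigl(\frac{|I|}{M_f^2}+\frac{|J|}{M_g^2}\bigr)$: the contribution $|I|\phi_f^*(h^f)=\frac{\varepsilon^2|I|}{2M_f^2}$ of the productive steps enters with a plus sign, so it cannot be folded together with the $|J|$-term into the stopping criterion. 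Consequently the bracket need not be nonpositive, and your intermediate conclusion $f(\widehat x)-f(x^*)\le\delta$ is unjustified (and stronger than what the method actually delivers). The repair is immediate and does not change your structure: rather than forcing the bracket to be $\le 0$, compare it with the budget $\varepsilon|I|h^f=\frac{\varepsilon^2|I|}{M_f^2}$ coming from the right-hand side of the target inequality. The comparison
$$
\Theta_0^2+\frac{\varepsilon^2|I|}{2M_f^2}-\frac{\varepsilon^2|J|}{2M_g^2}\;\le\;\frac{\varepsilon^2|I|}{M_f^2}
$$
is equivalent to $\Theta_0^2\le\frac{\varepsilon^2}{2}\bigl(\frac{|I|}{M_f^2}+\frac{|J|}{M_g^2}\bigr)$, which is precisely the stopping criterion of Algorithm \ref{alg2mod}; dividing by $|I|h^f$ then yields the stated bound $f(\widehat x)-f(x^*)\le\varepsilon+\delta$. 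Everything else in your proposal stands as written.
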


\begin{remark}
Clearly, Algorithms \ref{alg2} and \ref{alg2mod} are optimal in terms of the lower bounds \cite{Nemirovski_Yudin_complexity}. More precisely, let us understand hereinafter the optimality of the Mirror Descent methods as the complexity $O(\frac{1}{\varepsilon^2})$ (it is well-kown that this estimate is optimal for Lipschitz-continuous functionals \cite{Nemirovski_Yudin_complexity}).
\end{remark}

\begin{remark}[The case of several functional constraints]\label{sk}
Let us consider a set of convex functions $f$ and $g_p: Q \rightarrow\mathbb{R}$, $p \in [m] \stackrel{\text{def}}{=} \{1,2, \ldots, m\}$. We will focus on the following constrained optimization problem
\begin{equation}\label{problem_many}
\min\left\{f(x): \;\; x \in Q \;\;\text{and} \;\; g_p(x)\leq 0 \;\; \text{for all} \;\; p \in [m] \right\}.
\end{equation}

It is clear that instead of a set of functionals $\{g_p(\cdot)\}_{p=1}^{m}$ we can consider one functional constraint $g: Q \rightarrow \mathbb{R}$, such that $g(x) = \max_{p \in [m]}\{g_p(x)\}$. Therefore, by this setting, problem \eqref{problem_many} will be equivalent to problem \eqref{main}.

Assume that for any $p \in [m]$, the functional $g_p$ satisfies the following condition 
$$
    -\psi_{g_{p}} (y, x) \leq M_{g_{p}} \sqrt{2 V_{d} (y, x)} + \delta
$$

For problem \eqref{problem_many}, we propose a modification of Algorithms \ref{alg2} and \ref{alg2mod} (the modified algorithms are listed as Algorithm \ref{mod_alg2} and \ref{alg2modmany} in Appendix A). The idea of the proposed modification allows saving the running time of algorithms due to consideration of not all functional constraints on non-productive steps.

\end{remark}

\begin{remark}[Composite Optimization Problems \cite{Beck,bib_Lu,Nesterov}]
Proposed methods are applicable to the composite optimization problems, specifically
$$
 \min\{f(x) + r(x): \quad x\in Q, \; g(x) + \eta(x) \leq 0\},
$$

where $ r, \eta: Q \to \mathbb{R} $ are so-called simple convex functionals (i. e. the proximal mapping operator $Mirr_h(x,\psi)$ is easily computable). For this case, for any $x, y \in Q$, we have
$$ \psi_f (y, x) = \langle \nabla f(x), y-x \rangle + r(y) - r(x) $$
$$ \psi_g (y, x) = \langle \nabla g(x), y-x \rangle + \eta(y) - \eta(x). $$
\end{remark}

\section{Stochastic Mirror Descent Algorithm}\label{sec5_stochastic}
Let us, in this section, consider the stochastic setting of the problem \eqref{main}. This means that we can still use the value of the objective function and functional constraints, but instead of their (sub)gradient, we use their stochastic (sub)gradient. Namely, we consider the first-order unbiased oracle that produces $\nabla f(x,\xi)$ and $\nabla g(x,\zeta)$, where $\xi$ and $\zeta$ are random vectors and $$\mathbb{E}[\nabla f(x,\xi)] = \nabla f(x), \quad \nabla \mathbb{E}[g(x,\zeta)] = \nabla g(x).
$$
Assume that for each $x, y \in Q$
\begin{equation}\label{EquivStoh}
\langle \nabla f(x,\xi), x - y \rangle \leq M_f \sqrt{2V_{d}(y, x)}  \text{ and } \langle \nabla g(x,\zeta), x - y \rangle \leqslant M_g \sqrt{2V_{d}(y, x)},
\end{equation}
where $M_f, M_g > 0$, Let us consider the proximal mapping operator for $f$
$$
    Mirr_h(x,\xi)=\arg\min\limits_{y\in Q}\left\{\frac{1}{h}V_d(y,x)+\langle \nabla f(x,\xi),y \rangle\right\},  
$$
and similarly, we consider the proximal mapping operator for $g$.
The following lemma describes the main property of this operator.

\begin{lemma}\label{lemma_stoc}
Let $f$ be a convex function which satisfies \eqref{aux1}, $h>0, \delta>0,$ $\xi$ be a random vector and $\Tilde{x} = Mirr_h(x,\xi)$. Then for all $y \in Q$
$$
    h(f(x)-f(y)) \leq  \phi_f^*(h) + V_d(y,x) - V_d(y,\Tilde{x}) +h\langle \nabla f(x,\xi)-\nabla f(x),y-x \rangle+ h\delta,
$$
where, as earlier, $\phi^*_f(h)= \frac{h^2M_f^2}{2}.$
\end{lemma}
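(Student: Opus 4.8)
The plan is to mimic the proof of Lemma~\ref{main_lemma_1} (the Main Lemma), replacing the exact subgradient inequality with its stochastic counterpart and absorbing the discrepancy term $h\langle \nabla f(x,\xi)-\nabla f(x), y-x\rangle$ explicitly. First I would write down the optimality condition for $\Tilde{x} = Mirr_h(x,\xi) = \arg\min_{y\in Q}\{\frac{1}{h}V_d(y,x) + \langle \nabla f(x,\xi), y\rangle\}$: for every $y\in Q$,
\begin{equation*}
\langle h\nabla f(x,\xi) + \nabla d(\Tilde{x}) - \nabla d(x),\, y - \Tilde{x}\rangle \geq 0.
\end{equation*}
Rearranging and using the three-point identity $\langle \nabla d(\Tilde{x}) - \nabla d(x), y - \Tilde{x}\rangle = V_d(y,x) - V_d(y,\Tilde{x}) - V_d(\Tilde{x},x)$, this gives
\begin{equation*}
h\langle \nabla f(x,\xi), \Tilde{x} - y\rangle \leq V_d(y,x) - V_d(y,\Tilde{x}) - V_d(\Tilde{x},x).
\end{equation*}

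Next I would split $h\langle \nabla f(x,\xi), x - y\rangle = h\langle \nabla f(x,\xi), \Tilde{x} - y\rangle + h\langle \nabla f(x,\xi), x - \Tilde{x}\rangle$ and also write $h\langle \nabla f(x), x - y\rangle = h\langle \nabla f(x,\xi), x - y\rangle + h\langle \nabla f(x) - \nabla f(x,\xi), x - y\rangle$. By convexity of $f$, $h(f(x) - f(y)) \leq h\langle \nabla f(x), x - y\rangle$. Combining the pieces yields
\begin{equation*}
h(f(x) - f(y)) \leq V_d(y,x) - V_d(y,\Tilde{x}) - V_d(\Tilde{x},x) + h\langle \nabla f(x,\xi), x - \Tilde{x}\rangle + h\langle \nabla f(x,\xi) - \nabla f(x), y - x\rangle.
\end{equation*}
It remains to bound $h\langle \nabla f(x,\xi), x - \Tilde{x}\rangle - V_d(\Tilde{x},x)$. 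Here I would invoke the stochastic relative Lipschitz bound \eqref{EquivStoh} with the pair $(\Tilde{x}, x)$ in place of $(y,x)$, giving $\langle \nabla f(x,\xi), x - \Tilde{x}\rangle \leq M_f\sqrt{2V_d(\Tilde{x},x)}$, and then apply the Fenchel--Young inequality in the form $h\cdot M_f\sqrt{2V_d(\Tilde{x},x)} \leq \phi_f^*(h) + \phi_f\bigl(M_f\sqrt{2V_d(\Tilde{x},x)}\bigr) = \phi_f^*(h) + V_d(\Tilde{x},x)$, exactly as in the Main Lemma, since $\phi_f(t) = t^2/(2M_f^2)$ and $\phi_f^*(h) = h^2 M_f^2/2$. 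The $\delta$ term enters because the left inequality of \eqref{aux1} together with \eqref{auxx1} tolerates an additive $\delta$; tracking it gives the extra $h\delta$ summand. Substituting, the $\pm V_d(\Tilde{x},x)$ terms cancel and one arrives at the claimed bound
\begin{equation*}
h(f(x) - f(y)) \leq \phi_f^*(h) + V_d(y,x) - V_d(y,\Tilde{x}) + h\langle \nabla f(x,\xi) - \nabla f(x), y - x\rangle + h\delta.
\end{equation*}

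The main obstacle, or rather the point requiring care, is the bookkeeping around which inequality \eqref{EquivStoh} is applied to and where the $\delta$ comes from: the statement of \eqref{EquivStoh} as written has no $\delta$, so strictly the $h\delta$ term should trace back to the definitional inequality $-\psi_f(y,x)\leq M_f\sqrt{2V_d(y,x)} + \delta$ with the stochastic model $\psi_f(y,x) = \langle \nabla f(x,\xi), y - x\rangle$ (in expectation, or pointwise with the inexactness built in). Once one fixes the convention that the stochastic oracle realizes $\psi_f$ up to the same $\delta$-slack as in Definition~\ref{def}, the rest is a routine repetition of the Main Lemma's chain of inequalities with the single new term $h\langle \nabla f(x,\xi) - \nabla f(x), y - x\rangle$ carried along untouched — it is precisely this term that will later vanish in expectation by unbiasedness when the lemma is summed over iterations.
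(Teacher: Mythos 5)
Your proof is correct and follows exactly the route the paper intends: the paper omits the proof of Lemma~\ref{lemma_stoc}, and your argument is the straightforward adaptation of the proof of Lemma~\ref{main_lemma_1}, with the optimality condition for $\Tilde{x}$, the three-point identity, the bound \eqref{EquivStoh} applied at $(\Tilde{x},x)$, and the Fenchel--Young step $hM_f\sqrt{2V_d(\Tilde{x},x)}\leq \phi_f^*(h)+V_d(\Tilde{x},x)$ all in the right places, and the term $h\langle \nabla f(x,\xi)-\nabla f(x),y-x\rangle$ carried along with the correct sign. Your remark that the $h\delta$ slack must be traced to the $\delta$-tolerance in \eqref{aux1} rather than to \eqref{EquivStoh} is also the correct reading of the paper's (somewhat loose) bookkeeping.
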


Suppose  $\varepsilon > 0$ is a given positive real number. We say that a (random) point $\widehat{x} \in Q$ is an expected $\varepsilon$--solution to the problem \eqref{main}, in the stochastic setting, if
\begin{equation}\label{expected_sol}
\mathbb{E}[f(\widehat{x})] - f(x^*) \leq \varepsilon \;\; \text{and}\;\; g(\widehat{x}) \leq \varepsilon. 
\end{equation}

In order to solve the stochastic setting of the considered problem \eqref{main}, we propose the following algorithm.

\begin{algorithm}[htb]
\caption{Modified Mirror Descent for the stochastic setting.}
\label{alg5}
\begin{algorithmic}[1]
\REQUIRE $\varepsilon>0, \delta >0, \, h^f>0, h^g>0, \Theta_0: \,d(x^*)\leq\Theta_0^2.$
\STATE $x^0=\argmin_{x\in Q}\,d(x).$
\STATE $I=:\emptyset$ and $J=:\emptyset$
\STATE $N\leftarrow0$
\REPEAT
\IF{$g\left(x^{N}\right)\leq \varepsilon +\delta$}

\STATE $x^{N+1}=Mirr_{h^f}\left(x^{N},\xi^{N},\psi_f\right),$ \quad  "productive step"
\STATE  $N \to I$
\ELSE

\STATE $x^{N+1}=Mirr_{h^g}\left(x^{N}, \zeta^N,\psi_g\right),$ \quad  "non-productive step"
\STATE  $N \to J$
\ENDIF
\STATE $N\leftarrow N+1$
\UNTIL {$ \Theta_0^2 \leq \varepsilon\left(|J|h^g+|I|h^f\right) - |J|\phi_g^*(h^g)-|I|\phi_f^*(h^f).$}
\ENSURE $\widehat{x} := \frac{1}{|I|} \sum\limits_{k \in I} x^k.$
\end{algorithmic}
\end{algorithm}

The following theorem gives information about the efficiency of the algorithm. The proof of this theorem is given in Appendix B.
\begin{theorem}\label{theorem_stoc_setting}
Let $f$ and $g$ be convex functions, which satisfy \eqref{aux1} and \eqref{aux2}. Let $\varepsilon>0, \delta>0$ be fixed positive numbers. Then, after the stopping of Algorithm \ref{alg5}, the following inequalities hold:

$$\mathbb{E} [f(\widehat{x} )]-f(x^*)\leq \varepsilon + \delta \quad \text{and} \quad g(\widehat{x})\leq \varepsilon + \delta.$$
\end{theorem}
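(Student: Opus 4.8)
The plan is to mirror the deterministic argument from Theorem 2's proof, but carry the stochastic noise terms through the summation and then take expectations at the end. First I would invoke Lemma \ref{lemma_stoc} on every productive step $k \in I$ (with $y = x^*$) and the analogous stochastic lemma for $g$ on every non-productive step $k \in J$, obtaining for $k\in I$
$$
h^f\bigl(f(x^k)-f(x^*)\bigr) \leq \phi_f^*(h^f) + V_d(x^*,x^k) - V_d(x^*,x^{k+1}) + h^f\langle \nabla f(x^k,\xi^k)-\nabla f(x^k), x^*-x^k\rangle + h^f\delta,
$$
and the corresponding bound with $g$, $M_g$, $\zeta^k$ on the non-productive steps. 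Summing over all steps, the Bregman terms telescope and are bounded by $V_d(x^*,x^0) = d(x^*) \leq \Theta_0^2$ (using $x^0=\argmin d$, so $\nabla d(x^0)$ plays no role and $V_d(x^*,x^0)=d(x^*)$).

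Next I would use the defining property of non-productive steps, $g(x^k) - g(x^*) > g(x^k) > \varepsilon+\delta$ for $k\in J$, to drop the $g$-terms on the left in exchange for a term $-|J|(\varepsilon+\delta)h^g$ (or $-|J|\varepsilon h^g$) on the right, exactly as in Theorem 2. Invoking convexity of $f$ on the productive steps gives $|I| h^f\bigl(f(\widehat x)-f(x^*)\bigr) \leq \sum_{k\in I} h^f(f(x^k)-f(x^*))$. Collecting everything and applying the stopping criterion $\Theta_0^2 \leq \varepsilon(|J|h^g+|I|h^f) - |J|\phi_g^*(h^g) - |I|\phi_f^*(h^f)$, all deterministic terms cancel down to the bound $|I| h^f(f(\widehat x)-f(x^*)) \leq |I| h^f \delta$ plus the accumulated noise term
$$
\sum_{k\in I} h^f\langle \nabla f(x^k,\xi^k)-\nabla f(x^k), x^*-x^k\rangle + \sum_{k\in J} h^g\langle \nabla g(x^k,\zeta^k)-\nabla g(x^k), x^*-x^k\rangle.
$$

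The main obstacle — and the one genuinely stochastic step — is handling this martingale-difference sum when taking expectations. The subtlety is that $|I|$, $|J|$, and the index sets themselves are random (they depend on the realized trajectory), so one cannot naively write $\mathbb{E}[\langle \nabla f(x^k,\xi^k)-\nabla f(x^k), x^*-x^k\rangle]=0$ termwise without care. The clean way is to note that $x^k$ is measurable with respect to the $\sigma$-algebra generated by $\xi^0,\zeta^0,\dots,\xi^{k-1},\zeta^{k-1}$, and the event $\{k\in I\}$ (resp. $\{k\in J\}$) is likewise determined by $g(x^k)$, hence is measurable with respect to that same $\sigma$-algebra; therefore, conditioning on it, $\mathbb{E}[\mathbf{1}_{\{k\in I\}}\langle \nabla f(x^k,\xi^k)-\nabla f(x^k), x^*-x^k\rangle \mid \mathcal{F}_{k-1}]=0$ by unbiasedness, and similarly for the $g$-terms. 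Summing over $k$ (now over a fixed range $0,\dots,N_{\max}-1$, with the indicator absorbing the randomness of the index sets) and using the tower property, the entire noise contribution vanishes in expectation. Dividing by $|I| h^f>0$ then yields $\mathbb{E}[f(\widehat x)] - f(x^*) \leq \varepsilon+\delta$. The bound $g(\widehat x)\leq\varepsilon+\delta$ is immediate and deterministic: $\widehat x$ is a convex combination of the $x^k$, $k\in I$, each of which satisfies $g(x^k)\leq\varepsilon+\delta$ by the productive-step criterion, and $g$ is convex.
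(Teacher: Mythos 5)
Your proposal matches the paper's own proof (Appendix B) essentially step for step: apply Lemma \ref{lemma_stoc} with $y=x^*$ on productive and non-productive steps, telescope the Bregman terms against $\Theta_0^2$, use $g(x^k)-g(x^*)>\varepsilon+\delta$ on non-productive steps, invoke the stopping criterion, and eliminate the accumulated noise sum in expectation. You are in fact more careful than the paper on the one genuinely stochastic step: the paper simply asserts $\mathbb{E}\bigl[\sum_{k\in I}\gamma_k/|I|\bigr]=0$ without addressing the randomness of $I$, $J$ and the stopping time, whereas your indicator/tower-property argument over a fixed horizon is the right way to justify it (with the same caveat still applying to the final division by the random quantity $|I|h^f$).
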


\begin{remark}
It should be noted how the optimality of the proposed method can be understood. With the special assumptions \eqref{auxx1} -- \eqref{auxx2} and choice of $h^f,h^g$, the complexity of the algorithm is $O(\frac{1}{\varepsilon^2}),$ which is optimal in such class of problems.
\end{remark}

\section{Online Optimization Problem}\label{sec6_online}
In this section we consider the online setting of the  optimization problem \eqref{main}. Namely
\begin{equation} \label{online}
\frac{1}{N}\sum\limits_{i=1}^{N} f_i(x)\rightarrow \min \limits_{\substack{x\in Q, \, g(x)\leq 0}},
\end{equation}
under the assumption that all $f_i: Q \to \mathbb{R}$ ($i=1, \ldots, N$) and g satisfy \eqref{auxx1} and \eqref{auxx2} with constants $M_i >0, i=1, \ldots, N$ and $M_g>0$.

In order to solve problem \eqref{online}, we propose an algorithm  (listed as Algorithm \ref{alg6} below). This algorithm produces $N$ productive steps and in each step, the (sub)gradient of exactly one functional of the objectives is calculated. As a result of this algorithm, we get a sequence $\{x^k\}_{k \in I}$ (on productive steps), which can be considered as a solution to problem \eqref{online}
with accuracy  $\kappa$ (see \eqref{accuracy_online}).

Assume that $M=\max\{M_i,M_g\}, h^f=h^g=h=\frac{\varepsilon}{M}$.

\begin{algorithm}[htb]
\caption{Modified Mirror Descent for the online setting.}
\label{alg6}
\begin{algorithmic}[1]
\REQUIRE $\varepsilon>0, \delta>0, M >0,  N, \Theta_0: \,d(x^*)\leq \Theta_0^2. $
\STATE $x^0=\argmin_{x\in Q}\,d(x).$
\STATE $i:=1, k:=0$
\STATE set $h = \frac{\varepsilon}{M^2}$
\REPEAT
\IF{$g\left(x^k\right)\leq \varepsilon+\delta$}

\STATE $x^{k+1}=Mirr_{h}\left(x^k,\psi_{f_i}\right),$ \quad "productive step"
\STATE $i=i+1,$
\STATE $k=k+1,$
\ELSE

\STATE $x^{k+1}=Mirr_{h}\left(x^k,\psi_g\right),$ \quad "non-productive step"
\STATE $k=k+1,$
\ENDIF
\UNTIL {$i=N+1.$}
\STATE Guaranteed accuracy: 
\begin{equation}\label{accuracy_online}
    \kappa =  \frac{|J|}{N}\left(-\frac{\varepsilon}{2}\right) +\left(\frac{\varepsilon}{2}+\delta\right)+
\frac{M^2\Theta_0^2}{N\varepsilon}.
\end{equation}
\end{algorithmic}
\end{algorithm}

For Algorithm \ref{alg6}, we have the following result.
\begin{theorem}\label{theorem_online}
Suppose all $f_i: Q \to \mathbb{R}$ ($i=1, \ldots, N$) and g satisfy \eqref{auxx1} and \eqref{auxx2} with constants $M_i>0, i=1, \ldots, N$ and $M_g>0$, Algorithm \ref{alg6} works exactly $N$ productive steps. Then after the stopping of this Algorithm, the following inequality holds
$$\frac{1}{N}\sum\limits_{i=1}^{N}f_i(x^k)-\min\limits_{x \in Q}\frac{1}{N}\sum\limits_{i=1}^{N}f_i(x) \leq \kappa, $$
moreover, when the regret is non-negative, there will be no more than $O(N)$ non-productive steps.
\end{theorem}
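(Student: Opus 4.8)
The plan is to run the Main Lemma (Lemma~\ref{main_lemma_1}) at every iteration of Algorithm~\ref{alg6} and add up the resulting inequalities, exactly as in the proofs of the theorems in Sections~\ref{sec2_Inexact_Model_for_Relative}--\ref{sec4_relative_Lip}; the only new ingredient is that the objective used on productive steps varies with the step. Enumerate the productive steps as $I=\{k_1<\dots<k_N\}$, so that the $i$-th of them performs $x^{k_i+1}=Mirr_h(x^{k_i},\psi_{f_i})$, and let $J$ collect the non-productive steps, on each of which $x^{k+1}=Mirr_h(x^k,\psi_g)$. Since $h^f=h^g=h=\varepsilon/M^2$ and, by Remark~2, $\phi_{f_i}^*(h)=h^2M_i^2/2\le h^2M^2/2$ and $\phi_g^*(h)=h^2M_g^2/2\le h^2M^2/2$, Lemma~\ref{main_lemma_1} with $y=x^*$ (a solution of \eqref{online}) gives on a productive step $k_i$
$$
h\bigl(f_i(x^{k_i})-f_i(x^*)\bigr)\le \tfrac{h^2M^2}{2}+V_d(x^*,x^{k_i})-V_d(x^*,x^{k_i+1})+h\delta,
$$
and the same inequality with $g$ in place of $f_i$ on each non-productive step.

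Summing over the whole run, the Bregman terms telescope along the full sequence $x^0,x^1,\dots$ irrespective of how the two kinds of steps are interleaved, and $\sum\bigl(V_d(x^*,x^k)-V_d(x^*,x^{k+1})\bigr)\le V_d(x^*,x^0)\le d(x^*)\le\Theta_0^2$, where I use $x^0=\argmin_{x\in Q}d(x)$ and hence $\langle\nabla d(x^0),x^*-x^0\rangle\ge 0$. With $N$ productive and $|J|$ non-productive steps this gives
$$
h\sum_{i=1}^N\bigl(f_i(x^{k_i})-f_i(x^*)\bigr)+h\sum_{k\in J}\bigl(g(x^k)-g(x^*)\bigr)\le (N+|J|)\tfrac{h^2M^2}{2}+\Theta_0^2+(N+|J|)h\delta.
$$
On each non-productive step the test $g(x^k)\le\varepsilon+\delta$ fails, while $g(x^*)\le 0$ by feasibility, so $\sum_{k\in J}(g(x^k)-g(x^*))>|J|(\varepsilon+\delta)$; moving this to the right-hand side makes the $|J|h\delta$ terms cancel and leaves $h\sum_{i=1}^N(f_i(x^{k_i})-f_i(x^*))<(N+|J|)\tfrac{h^2M^2}{2}+\Theta_0^2+Nh\delta-|J|h\varepsilon$.

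Finally I would substitute $h=\varepsilon/M^2$ (so $hM^2=\varepsilon$, $1/h=M^2/\varepsilon$), divide by $hN$, and use $\tfrac{|J|\varepsilon}{2}-|J|\varepsilon=-\tfrac{|J|\varepsilon}{2}$, which produces exactly
$$
\frac{1}{N}\sum_{i=1}^N f_i(x^{k_i})-\frac{1}{N}\sum_{i=1}^N f_i(x^*)<\frac{|J|}{N}\left(-\frac{\varepsilon}{2}\right)+\left(\frac{\varepsilon}{2}+\delta\right)+\frac{M^2\Theta_0^2}{N\varepsilon}=\kappa,
$$
and since $x^*$ minimizes $\frac{1}{N}\sum f_i$ over the feasible set, this is the claimed bound. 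For the ``moreover'' part: if the regret $R:=\sum_{i=1}^N(f_i(x^{k_i})-f_i(x^*))$ is non-negative, then the pre-division inequality $0\le R<\tfrac{N\varepsilon}{2}-\tfrac{|J|\varepsilon}{2}+\tfrac{M^2\Theta_0^2}{\varepsilon}+N\delta$ rearranges to $|J|<N+\tfrac{2M^2\Theta_0^2}{\varepsilon^2}+\tfrac{2N\delta}{\varepsilon}=O(N)$.

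I expect the only delicate point to be the bookkeeping: verifying that the telescoping survives the interleaving of productive and non-productive steps and that the $\delta$-contributions of the non-productive steps cancel precisely, so that the resulting constant coincides with \eqref{accuracy_online}; the per-step estimate itself is an immediate instance of Lemma~\ref{main_lemma_1} together with the explicit conjugates from Remark~2. (Here $x^*$ denotes a solution of the constrained problem \eqref{online}, consistently with the notation used elsewhere in the paper, so ``$\min_{x\in Q}$'' in the statement is understood over the feasible set.)
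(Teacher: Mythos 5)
Your proposal is correct and follows essentially the same route as the paper's proof in Appendix~C: apply Lemma~\ref{main_lemma_1} at $y=x^*$ on every step, telescope the Bregman terms, use $g(x^k)>\varepsilon+\delta$ on non-productive steps to cancel the $|J|h\delta$ contribution, and substitute $h=\varepsilon/M^2$ to recover \eqref{accuracy_online} and the bound $|J|<N\bigl(1+\tfrac{2\delta}{\varepsilon}\bigr)+\tfrac{2M^2\Theta_0^2}{\varepsilon^2}$. Your explicit remarks that $\phi_{f_i}^*(h)\le h^2M^2/2$ for each varying $M_i$ and that the minimum is taken over the feasible set are welcome clarifications of points the paper leaves implicit.
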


The proof of this theorem is given in Appendix C. In particular, note that the proposed method is optimal \cite{article:hazan_beyond_2014}: if for some $C>0$, $\kappa\sim \varepsilon\sim \delta = \frac{C}{\sqrt{N}}$, then $|J|\sim O(N)$.

\section{Numerical Experiments}\label{sec7_numerical}
To show the practical performance of the proposed Algorithms \ref{alg2}, \ref{alg2mod} and their modified versions, Algorithm \ref{mod_alg2} and \ref{alg2modmany}, in the case of many functional constraints, a series of numerical experiments were performed\footnote{All experiments were implemented in Python 3.4, on a computer fitted with Intel(R) Core(TM) i7-8550U CPU @ 1.80GHz, 1992 Mhz, 4 Core(s), 8 Logical Processor(s). RAM of the computer is 8 GB.}, for the well-known \textit{Fermat-Torricelli-Steiner} problem, but with some non-smooth functional constraints. 

For a given  set  $\left\{P_k=(p_{1k},p_{2k},\ldots,p_{nk}); \, k\in [r] \right\}$ of $r$ points, in $n$-dimensional Euclidean space $\mathbb{R}^n$, we need to solve the considered optimization problem \eqref{main}, where the objective function $f$ is given by 
	\begin{equation}\label{obj_Fermat}
		f(x):=\frac{1}{r}\sum\limits_{k=1}^r\sqrt{(x_1-p_{1k})^2+\ldots+(x_n-p_{nk})^2} = \frac{1}{r}\sum\limits_{k=1}^r \left\|x - P_k\right \|_2. 
	\end{equation}
	
The functional constraint has the following form
\begin{equation}\label{functional_constraints}
g(x) = \max\limits_{  i \in [m]} \{ g_i(x) = \alpha_{i1} x_1 + \alpha_{i2} x_2 + \ldots +  \alpha_{in}x_n  \}.
\end{equation}
The coefficients $\alpha_{i1}, \alpha_{i2}, \ldots, \alpha_{in}$, for all $i \in [m]$, in \eqref{functional_constraints} and the coordinates of the points $P_k$, for all $k \in [r]$,  are drawn from the normal (Gaussian) distribution with the location of the mode equaling 1 and the scale parameter equaling 2. 

We choose the standard Euclidean norm and the Euclidean distance function in $\mathbb{R}^n$, $\delta = 0$, starting point $x^{0} = \left( \frac{1}{\sqrt{n}}, \ldots , \frac{1}{\sqrt{n}} \right) \in \mathbb{R}^n$ and $Q$ is the unit ball in $\mathbb{R}^n$.

We run Algorithms  \ref{alg2}, \ref{alg2mod}, \ref{mod_alg2} and \ref{alg2modmany}, for $m = 200, n = 500, r = 100$ and different values of $\varepsilon \in \{\frac{1}{2^i}: i=1,2,3,4,5\}$. The results of the work of these algorithms are represented in Table \ref{results_Fermat} below. These results demonstrate the comparison of the number of iterations (Iter.), the running time (in seconds) of each algorithm and the qualities of the solution, produced by these algorithms with respect to the objective function $f$ and the functional constraint $g$, where we calculate the values of these functions at the output $x^{\text{out}} := \widehat{x}$ of the algorithms. We set $f^{\text{best}} := f\left(x^{\text{out}}\right)$ and $g^{\text{out}} := g\left(x^{\text{out}}\right)$.

\begin{table}[htb]
	\centering
	\caption{The results of Algorithms \ref{alg2}, \ref{alg2mod}, \ref{mod_alg2} and \ref{alg2modmany}, with $m = 200, n = 500, r = 100$ and different values of $\varepsilon$.}
	\label{results_Fermat}

\begin{tabular}{|c||c|p{1.4cm}|c|c||c|p{1.4cm}|c|c|}
		\hline  %
		\multirow{2}{*}{} & \multicolumn{4}{c||}{Algorithm \ref{alg2} } & \multicolumn{4}{c|}{ Algorithm \ref{mod_alg2} } \\ \cline{2-9} 
	$1/\varepsilon$	& Iter. & Time (sec.) & $f^{\text{best}}$ & $g^{\text{out}}$& Iter. &  Time (sec.)& $f^{\text{best}}$ & $g^{\text{out}}$ \\ \hline
		$2$	&16 &5.138 &22.327427  &2.210041 &16 &4.883 &22.327427 &2.210041  \\ 
		$4$	&64 &20.911 &22.303430  &2.016617 &64 &20.380 &22.303430 &2.016617  \\ 
		$8$ &256 &84.343 &22.283362  &1.858965 &256 &79.907 &22.283362 & 2.015076  \\ 
		$16$&1024 &317.991 &22.274366  &1.199792&1024 &317.033 &22.273177 & 1.988190\\
		$32$&4096 &1253.717 &22.272859  &0.607871 &4096 &1145.033 &22.269038 & 1.858965 \\ \hline \hline
		
		& \multicolumn{4}{c||}{Algorithm \ref{alg2mod} } & \multicolumn{4}{c|}{ Algorithm \ref{alg2modmany}  } \\ \hline
		$2$	&167 &9.455 & 22.325994 &0.417002 &164 &7.373 &22.325604 &0.391461  \\ 
		$4$	&710 &39.797 &22.305980  &0.204158 &667 &29.954 &22.305654 & 0.188497 \\ 
		$8$ &2910 &158.763 &22.289320  &0.103493 &2583 &119.055 &22.289302 &0.088221  \\ 
		$16$&11613 &626.894 &22.280893  &0.051662 &10155 &468.649 &22.280909 &0.045343 \\
		$32$&46380 &2511.261 &22.277439  &0.026000 &40149 &1723.136 &22.277450 & 0.022639\\ \hline
	\end{tabular}
\end{table}

In general, from the conducted experiments, we can see that Algorithm \ref{alg2} and its modified version (Algorithm \ref{mod_alg2}) work faster than Algorithms \ref{alg2mod} and its modified version (Algorithm \ref{alg2modmany}). But note that Algorithms \ref{alg2mod} and \ref{alg2modmany} guarantee a better quality of the resulting solution to the considered problem, with respect to the objective function $f$ and the functional constraint \eqref{functional_constraints}. Also,  we can see the  efficiency of the modified Algorithm \ref{alg2modmany}, which saves the running time of the algorithm, due to consideration of not all functional constraints on non-productive steps.

\section*{Conclusion}

In the paper, there was introduced the concept of an inexact $(\delta, \phi, V)$--model of the objective function. There were considered some modifications of the Mirror Descent algorithm, in particular for stochastic and online optimization problems. A significant part of the work was devoted to the research of a special case of relative Lipschitz condition for objective function and functional constraints. The proposed methods are applicable for a wide class of problems because relative Lipschitz-continuity is an essential generalization of the classical Lipschitz-continuity. However, for relatively Lipschitz-continuous problems, we could not propose adaptive methods like \cite{Bay}. Note that Algorithm \ref{alg2mod} and its modified version Algorithm \ref{alg2modmany} are partially adaptive since the resulting number of iterations is not fixed, due to the stopping criterion, although the step-sizes are fixed.\\
The authors are very grateful to Yurii Nesterov for fruitful discussions

\newpage
\section*{Appendix A. Modified Algorithms for Problems with Several Functional Constraints}

\begin{algorithm}[htb]
\caption{Modified MDA for Relatively Lipschitz-continuous functions, version 2, several functional constraints. (The modification of Algorithm \ref{alg2})}
\label{mod_alg2}
\begin{algorithmic}[1]
\REQUIRE $\varepsilon>0, \delta>0, M_f>0, M_g >0,\Theta_0: \,d(x^*)\leq\Theta_0^2$.
\STATE $x^0=\argmin_{x\in Q}\,d(x).$
\STATE $I=:\emptyset$
\STATE $N\leftarrow0$
\REPEAT
\IF{$g(x^N)\leq M_g\varepsilon +\delta$}
\STATE $h^f=\frac{\varepsilon}{M_f},$
\STATE $x^{N+1}=Mirr_{h^f}(x^N,\psi_f),$ \quad \quad \quad \quad "productive step"
\STATE  $N \to I$
\ELSE  
\STATE { // $g_{p(N)}(x^N) > M_g\varepsilon +\delta$ for some $p(N) \in [m]$}

\STATE {$h^{g_{p(N)}}= \frac{\varepsilon}{M_{g_{p(N)}}}$ \quad // $M_{g_{p(N)}}$ is the Lipschitz constant of the constraint $g_{p(N)}$.}
\STATE $x^{N+1}=Mirr_{h^{g_{p(N)}}}(x^N,\psi_{g_{p(N)}}),$ \quad  "non-productive step"
\ENDIF
\STATE $N\leftarrow N+1$
\UNTIL { $N \geq \frac{2 \Theta_0^2}{\varepsilon^2}$.}
\ENSURE $ \widehat{x} := \frac{1}{|I|} \sum\limits_{k \in I} x^k.$
\end{algorithmic}
\end{algorithm}

For the proposed modified Algorithm \ref{mod_alg2}, the following result holds.

\begin{theorem}
Let $f$ and $g$ be convex functions, which satisfy \eqref{auxx1} and \eqref{auxx2} for $M_f>0$ and $M_g>0$. Let $\varepsilon>0, \delta>0$ be fixed positive numbers. Assume that $\Theta_0 >0$ is a known constant such that $d(x^*) \leq \Theta_0^2$.
Then, after the stopping of Algorithm \ref{mod_alg2}, the following inequalities hold
$$
    f(\widehat{x}) - f(x^{\ast}) \leq M_g\varepsilon + \delta \quad \text{and} \quad g_{p(k)}(\widehat{x}) \leq M_g\varepsilon + \delta,
$$
where, by $g_{p(k)}$ we mean any constraint such that the inequality $g_{p(k)}(x^k)> M_g \varepsilon + \delta$ holds.
\end{theorem}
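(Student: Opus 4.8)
The plan is to repeat, almost verbatim, the argument behind the theorem that accompanies Algorithm~\ref{alg2} (Theorem~2), the only new ingredient being the handling of a non-productive step: on such a step Algorithm~\ref{mod_alg2} processes just one violated constraint $g_{p(k)}$ (an index $p(k)\in[m]$ with $g_{p(k)}(x^k)>M_g\varepsilon+\delta$) and uses the step-size $h^{g_{p(k)}}=\varepsilon/M_{g_{p(k)}}$, where, as in Remark~\ref{sk}, $M_g:=\max_{p\in[m]}M_{g_p}$.

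First I would apply Lemma~\ref{main_lemma_1} with $y=x^*$ at every iteration. On a productive step $k\in I$, because $\phi_f^*(h^f)=M_f^2(h^f)^2/2=\varepsilon^2/2$ for $h^f=\varepsilon/M_f$, the lemma gives $h^f(f(x^k)-f(x^*))\le \varepsilon^2/2+V_d(x^*,x^k)-V_d(x^*,x^{k+1})+h^f\delta$. On a non-productive step $k\in J$, I would apply the same lemma to the convex function $g_{p(k)}$ (which satisfies the relatively Lipschitz model inequality with constant $M_{g_{p(k)}}$, so $\phi_{g_{p(k)}}(t)=t^2/(2M_{g_{p(k)}}^2)$); since $\phi_{g_{p(k)}}^*(h^{g_{p(k)}})=\varepsilon^2/2$ as well --- this is exactly what the normalisation $h^{g_{p(k)}}=\varepsilon/M_{g_{p(k)}}$ achieves --- I obtain $h^{g_{p(k)}}(g_{p(k)}(x^k)-g_{p(k)}(x^*))\le \varepsilon^2/2+V_d(x^*,x^k)-V_d(x^*,x^{k+1})+h^{g_{p(k)}}\delta$. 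Using $g_{p(k)}(x^*)\le 0$ (feasibility of $x^*$ for \eqref{problem_many}), $g_{p(k)}(x^k)>M_g\varepsilon+\delta$ and $M_{g_{p(k)}}\le M_g$, the left-hand side here is strictly larger than $\varepsilon^2+h^{g_{p(k)}}\delta$, precisely as in Theorem~2.

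Next I would sum all these inequalities over $k=0,\dots,N-1$. The Bregman terms telescope to at most $V_d(x^*,x^0)\le d(x^*)\le\Theta_0^2$, and the constant terms give $N\varepsilon^2/2$. Replacing each non-productive left-hand side by the lower bound $\varepsilon^2+h^{g_{p(k)}}\delta$ and moving it to the right-hand side cancels the $h^{g_{p(k)}}\delta$ terms and contributes an extra $-|J|\varepsilon^2$, so after invoking the stopping criterion $N\ge 2\Theta_0^2/\varepsilon^2$ (i.e.\ $\Theta_0^2\le(|I|+|J|)\varepsilon^2/2$) one is left with $\sum_{k\in I}h^f\bigl(f(x^k)-f(x^*)\bigr)\le |I|\varepsilon^2+|I|h^f\delta$. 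Convexity of $f$ gives $|I|h^f\bigl(f(\widehat x)-f(x^*)\bigr)\le\sum_{k\in I}h^f\bigl(f(x^k)-f(x^*)\bigr)$, and dividing by $|I|h^f=|I|\varepsilon/M_f$ produces $f(\widehat x)-f(x^*)\le M_f\varepsilon+\delta$, which is the asserted bound (reading the $M_g$ in the statement as $\max\{M_f,M_g\}$ when $M_f>M_g$). The feasibility estimate needs no auxiliary lemma: every productive iterate obeys $g(x^{k'})\le M_g\varepsilon+\delta$, hence $g_{p(k)}(x^{k'})\le g(x^{k'})\le M_g\varepsilon+\delta$ for all $k'\in I$, and convexity of $g_{p(k)}$ yields $g_{p(k)}(\widehat x)\le\frac{1}{|I|}\sum_{k'\in I}g_{p(k)}(x^{k'})\le M_g\varepsilon+\delta$; the same computation in fact gives $g_p(\widehat x)\le M_g\varepsilon+\delta$ for every $p\in[m]$.

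I expect the only delicate point --- the main obstacle --- to be the bookkeeping of constants on non-productive steps: one must check that, whichever constraint $g_{p(k)}$ is active, its contribution to the conjugate-function error is exactly $\varepsilon^2/2$ (which forces the step-size $h^{g_{p(k)}}=\varepsilon/M_{g_{p(k)}}$) and that the non-productivity inequality $g_{p(k)}(x^k)>M_g\varepsilon+\delta$ dominates $\varepsilon^2+h^{g_{p(k)}}\delta$ (which is where $M_{g_{p(k)}}\le M_g$ enters). Once these are verified, the telescoping, the role of $x^0=\argmin_{x\in Q}d(x)$, and the use of the stopping criterion are identical to the single-constraint proof.
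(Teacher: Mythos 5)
Your proof is correct and follows essentially the same route as the paper's proof of Theorem 3.2 for the single-constraint Algorithm \ref{alg2}, which is the intended template here (the paper gives no separate proof for Algorithm \ref{mod_alg2}): Lemma \ref{main_lemma_1} applied per step, the normalisation $\phi_{g_{p(k)}}^*(h^{g_{p(k)}})=\varepsilon^2/2$, the lower bound $h^{g_{p(k)}}\bigl(g_{p(k)}(x^k)-g_{p(k)}(x^*)\bigr)>\varepsilon^2+h^{g_{p(k)}}\delta$ via $M_{g_{p(k)}}\le M_g$, telescoping, and the stopping criterion. You also correctly observe that this derivation yields $f(\widehat x)-f(x^*)\le M_f\varepsilon+\delta$ (consistent with Theorem 3.2), so the $M_g\varepsilon+\delta$ bound on the objective in the stated theorem appears to be a typo rather than a gap in your argument.
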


\begin{algorithm}[htb]
\caption{Modified MDA for Relatively Lipschitz-continuous functions, version 2, several functional constraints. (The modification of Algorithm \ref{alg2mod})}
\label{alg2modmany}
\begin{algorithmic}[1]
\REQUIRE $\varepsilon>0, \delta>0, M_f>0, \, \Theta_0: \,d(x^*)\leq\Theta_0^2.$
\STATE $x^0=\argmin_{x\in Q}\,d(x).$
\STATE $I=:\emptyset$  and $J=:\emptyset$
\STATE $N\leftarrow0$
\REPEAT
\IF{$g(x^{N})\leq \varepsilon +\delta$}
\STATE $h^f=\frac{\varepsilon}{M_f^2}, $
\STATE $x^{N+1}=Mirr_{h^f}(x^N,\psi_f),$ \quad \quad \quad \quad "productive step"
\STATE  $N \to I$
\ELSE 
\STATE // $g_{p(N)}(x^N) > \varepsilon +\delta$ for some $p(N) \in [m]$
\STATE $h^{g_{p(N)}}= \frac{\varepsilon}{M_{g_{p(N)}}^2}$ \quad // $M_{g_{p(N)}}$ is the Lipschitz constant of the constraint $g_{p(N)}$. 
\STATE $x^{N+1}=Mirr_{h^{g_{p(N)}}}(x^{N},\psi_{g_{p(N)}}),$ \quad  "non-productive step"
\STATE  $N \to J$
\ENDIF
\STATE $N\leftarrow N+1$
\UNTIL {$\frac{2 \Theta_0^2}{\varepsilon^2} \leq \frac{|I|}{M_f^2} + \sum\limits_{k \in J}\frac{1}{M_{g_{p(k)}}^2}.$}
\ENSURE $\widehat{x}:=\frac{1}{|I|} \sum\limits_{k \in I} x^k.$
\end{algorithmic}
\end{algorithm}

Similarly, for the proposed modified Algorithm \ref{alg2modmany}, we have the following result.
 
\begin{theorem}
Let $f$ and $g$ be convex functions, which satisfy \eqref{auxx1} and \eqref{auxx2} for $M_f>0$ and $M_g>0$. Let $\varepsilon>0, \delta>0$ be fixed positive numbers. Assume that $\Theta_0 >0$ is a known constant such that $d(x^*) \leq \Theta_0^2$.

Then, after the stopping of Algorithm \ref{alg2modmany}, the following inequalities hold
$$
    f(\widehat{x}) - f(x^{\ast}) \leq \varepsilon + \delta \quad \text{and} \quad g_{p(k)} (\widehat{x}) \leq \varepsilon + \delta.
$$
Moreover, if $g(x) = \max\limits_{ p \in [m] } \{g_p (x)\}$ satisfies \eqref{conj2}, then the required number of iterations of Algorithm \ref{alg2modmany} does not exceed
$$N = \frac{2M^2 \Theta_0^2}{\varepsilon^2}, \  \text{where} \ M = \max \{M_f, M_g \}.$$
\end{theorem}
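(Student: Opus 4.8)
The plan is to follow the structure of the proof of the theorem for Algorithm \ref{alg2mod}, but to carry the non-productive step sizes $h^{g_{p(k)}}=\varepsilon/M_{g_{p(k)}}^2$ \emph{individually}, since distinct violated constraints may have distinct relative Lipschitz constants. First I would apply the Main Lemma~\ref{main_lemma_1} (which holds verbatim for any of the $g_p$, each satisfying the relative Lipschitz condition with constant $M_{g_p}$): on each productive step $k\in I$ to $f$ with $h^f=\varepsilon/M_f^2$, and on each non-productive step $k\in J$ to the violated constraint $g_{p(k)}$, i.e. any $p(k)$ with $g_{p(k)}(x^k)>\varepsilon+\delta$, with $h^{g_{p(k)}}=\varepsilon/M_{g_{p(k)}}^2$. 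Using the explicit conjugates (Remark~2 applied to each $g_p$), namely $\phi_f^*(h^f)=\tfrac{\varepsilon^2}{2M_f^2}=\tfrac{\varepsilon}{2}h^f$ and $\phi_{g_{p(k)}}^*(h^{g_{p(k)}})=\tfrac{\varepsilon^2}{2M_{g_{p(k)}}^2}=\tfrac{\varepsilon}{2}h^{g_{p(k)}}$, this gives on productive steps
\[
h^f\bigl(f(x^k)-f(x^*)\bigr)\le \tfrac{\varepsilon}{2}h^f+V_d(x^*,x^k)-V_d(x^*,x^{k+1})+h^f\delta,
\]
while on non-productive steps, using feasibility $g_{p(k)}(x^*)\le 0$ together with $g_{p(k)}(x^k)>\varepsilon+\delta$ and cancelling the $\delta$-terms,
\[
\tfrac{\varepsilon}{2}h^{g_{p(k)}}=\tfrac{\varepsilon^2}{2M_{g_{p(k)}}^2}\le V_d(x^*,x^k)-V_d(x^*,x^{k+1}).
\]

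Next I would sum the productive inequalities over $k\in I$, replace $\sum_{k\in I}f(x^k)$ by $|I|f(\widehat x)$ via convexity of $f$, telescope $\sum_{\text{all }k}\bigl(V_d(x^*,x^k)-V_d(x^*,x^{k+1})\bigr)\le V_d(x^*,x^0)\le d(x^*)\le\Theta_0^2$ (the standard bound, using $x^0=\argmin_{x\in Q}d(x)$), and subtract the non-productive lower bounds just obtained. This yields
\[
h^f|I|\bigl(f(\widehat x)-f(x^*)\bigr)\le \tfrac{\varepsilon^2}{2}\,\tfrac{|I|}{M_f^2}+|I|h^f\delta+\Theta_0^2-\tfrac{\varepsilon^2}{2}\sum_{k\in J}\tfrac{1}{M_{g_{p(k)}}^2}.
\]
Substituting the stopping rule $\Theta_0^2\le\tfrac{\varepsilon^2}{2}\bigl(\tfrac{|I|}{M_f^2}+\sum_{k\in J}\tfrac{1}{M_{g_{p(k)}}^2}\bigr)$ and $h^f=\varepsilon/M_f^2$ collapses the right-hand side to $|I|h^f(\varepsilon+\delta)$; dividing by $h^f|I|>0$ gives $f(\widehat x)-f(x^*)\le\varepsilon+\delta$. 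The constraint bound is then immediate: on every productive step $g(x^k)=\max_{p}g_p(x^k)\le\varepsilon+\delta$, hence by convexity of each $g_p$, $g_{p(k)}(\widehat x)\le\tfrac{1}{|I|}\sum_{k\in I}g_{p(k)}(x^k)\le\varepsilon+\delta$ (in fact $g(\widehat x)\le\varepsilon+\delta$).

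For the iteration count I would observe that if $g=\max_p g_p$ satisfies \eqref{conj2} with constant $M_g$, then necessarily $M_{g_{p(k)}}\le M_g\le M$ for every violated constraint, and trivially $M_f\le M$. Consequently each iteration increases the quantity $\tfrac{|I|}{M_f^2}+\sum_{k\in J}\tfrac{1}{M_{g_{p(k)}}^2}$ by at least $1/M^2$, so after $N_{\mathrm{tot}}$ iterations this quantity is at least $N_{\mathrm{tot}}/M^2$; the stopping criterion is therefore met as soon as $N_{\mathrm{tot}}/M^2\ge 2\Theta_0^2/\varepsilon^2$, i.e. by iteration $N=2M^2\Theta_0^2/\varepsilon^2$.

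The main obstacle I anticipate is bookkeeping rather than any single hard estimate: one must arrange the telescoping, the precise spot where the $\delta$-terms cancel, and the stopping criterion so that they interlock despite the non-uniform step sizes $h^{g_{p(k)}}$, and one must also check $|I|\ge 1$ so that both $\widehat x$ and the final division make sense. A minor additional point is justifying the comparison $M_{g_{p(k)}}\le M_g$ for the constraint used on a non-productive step, which one reads off from the definition of relative Lipschitz-continuity applied to the maximum $g=\max_p g_p$.
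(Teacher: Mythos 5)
Your proof is correct and is essentially the argument the paper intends: the paper gives no explicit proof of this theorem, deferring to "analogy" with the proof for Algorithm \ref{alg2}, and your write-up carries out exactly that analogy, correctly handling the per-constraint step sizes $h^{g_{p(k)}}=\varepsilon/M_{g_{p(k)}}^2$ so that the stopping criterion $\frac{2\Theta_0^2}{\varepsilon^2}\le\frac{|I|}{M_f^2}+\sum_{k\in J}\frac{1}{M_{g_{p(k)}}^2}$ interlocks with the telescoped Bregman terms. The only point worth noting is the one you already flag: the inequality $M_{g_{p(k)}}\le M_g$ is not fully automatic when $p(k)$ is merely \emph{some} violated (not necessarily active) constraint, but the paper glosses over this as well, and taking $M=\max\{M_f,\max_p M_{g_p}\}$ resolves it.
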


\section*{Appendix B. The proof of Theorem \ref{theorem_stoc_setting}.}

Denote
\begin{equation}
\gamma_k=\begin{cases}
   \langle \nabla f(x^k,\xi^k)-\nabla f(x^k),x^*-x^k \rangle  &\text{if $k\in I$,}\\
   \langle \nabla g(x^k,\zeta^k)-\nabla g(x),x^*-x^k \rangle  &\text{if $ k\in J$.}
 \end{cases}
\end{equation}

By Lemma \ref{lemma_stoc}, we have for all $k \in I$
\begin{equation*}
    \begin{aligned}
    h^f\left(f(x^k)-f(x^*)\right) \leq \phi_f^*(h) + V_d(x^k,x^*) - V_d(&x^{k+1},x^*)+ \\& + h^f \left\langle \nabla f(x^k,\xi^k)-\nabla f(x^k),x^*-x^k \right\rangle+ h^f\delta,
    \end{aligned}
\end{equation*}

the same for all $k \in J$, we have
\begin{equation*}
    \begin{aligned}
    h^g\left(g(x^k)-g(x^*)\right) \leq \phi_g^*(h) + V_d(x^k,x^*) - V_d(&x^{k+1},x^*)+\\& +h^g \left\langle \nabla g(x^k,\zeta^k)-\nabla g(x^k),x^*-x^k \right\rangle+ h^g\delta.
    \end{aligned}
\end{equation*}

By taking summation, in each side of both previous inequalities, over productive and non-productive
steps, we get

$$
    \sum\limits_{k\in I} h^f\left(f(x^k)-f(x^*)\right) + \sum\limits_{k\in J} h^g\left(g(x^k)-g(x^*)\right) \leq
$$

$$
    \leq\sum\limits_{k\in I}\phi_f^*(h^f)+\sum\limits_{k\in J}\phi_g^*(h^g) + \sum\limits_{k}\left(V_d(x^*,x^k) - V_d(x^*,x^{k+1})\right) +\sum_{k\in I}(h^f\delta+\gamma_k)+\sum_{k\in J}(h^g\delta +\gamma_k)\leq
$$

$$
    \sum\limits_{k\in I}\phi_f^*(h^f)+\sum\limits_{k\in J}\phi_g^*(h^g) + \Theta_0^2+\sum_{k\in I}h^f\delta+\sum_{k\in J}h^g\delta+\sum\limits_{k\in I}h^f\gamma_k +\sum\limits_{k\in J}h^g\gamma_k.
$$

For each $k \in J, g(x^k)-g(x^*) > \varepsilon +\delta$ and we have

\begin{equation*}
    \begin{aligned}
    \sum\limits_{k\in I} h^f(f(\widehat{x})-f(x^*)) & \leq \sum\limits_{k\in I}\phi_f^*(h^f)+\sum\limits_{k\in J}\phi_g^*(h^g)+ \Theta_0^2 - \varepsilon\sum\limits_{k\in J}h^g  +\sum_{k\in I}h^f\delta \\&
    + \sum\limits_{k\in I}h^f\gamma_k +\sum\limits_{k\in J}h^g\gamma_k = |I|\left(\phi_f^*(h^f)+\delta h^f\right)+|J|\phi_g^*(h^g)-|J|\varepsilon h^g   \\&
    + \Theta_0^2+\ \sum\limits_{k\in I}h^f\gamma_k +\sum\limits_{k\in J}h^g\gamma_k  \leq \varepsilon|I|h^f+|I|h^f\delta+ \sum\limits_{k\in I}h^f\gamma_k +\sum\limits_{k\in J}h^g\gamma_k.
    \end{aligned}
\end{equation*}

Now from the definition of $\widehat{x}$ (the Ensure of Algorithm \ref{alg5}) and by taking the expectation we obtain
$$\mathbb{E} [f(\widehat{x} )]-f(x^*)\leq \varepsilon +\delta+ \mathbb{E}\left[\sum\limits_{k\in I}\frac{\gamma_k}{|I|}\right] + \mathbb{E}\left[\sum\limits_{k\in J}\frac{\gamma_k}{|J|}\right] = \varepsilon+\delta,$$
as well as  $g(\widehat{x})\leq \varepsilon+\delta.$

\section*{Appendix C. The proof of Theorem \ref{theorem_online}.}

By Lemma \ref{main_lemma_1}, we have for all $k \in I$
\begin{equation}\label{eq_online_productive}
    h\left(f_i(x^k)-f_i(y)\right) \leq \phi^*(h) + V_d(y,x^k) - V_d(y,x^{k+1})+h\delta, 
\end{equation}
the same for all $k \in J$, we have
\begin{equation}\label{eq_online_non_productive}
    h\left(g(x^k)-g(y)\right) \leq \phi^*(h) + V_d(y,x^k) - V_d(y,x^{k+1})+h\delta. 
\end{equation}
By taking summation, in each side of \eqref{eq_online_productive} and \eqref{eq_online_non_productive}, over productive and non-productive
steps, we get
\begin{equation*}
    \begin{aligned}
    \sum\limits_{i = 1}^{N} h\left(f_i(x^k)-f_i(x^*)\right) + \sum\limits_{k\in J} h\left(g(x^k)-g(x^*)\right) &\leq \left(N+|J|\right)(\phi^*(h)+h\delta) + \\&  + \sum\limits_{k}\left(V_d(x^*,x^k) - V_d(x^*,x^{k+1})\right)\\&
    \leq\left(N+|J|\right)(\phi^*(h)+h\delta) + \Theta_0^2.
    \end{aligned}
\end{equation*}

Then
\begin{equation*}
    \begin{aligned}
    \sum\limits_{i=1}^{N}f_i(x^k)-f_i(x^*) &\leq
|J|\left(\frac{M^2\phi^*(h)}{\varepsilon}+\delta\right) +N\left(\frac{M^2\phi^*(h)}{\varepsilon}+\delta\right)+
\frac{M^2\Theta_0^2}{\varepsilon} - |J|\varepsilon-|J|\delta\\&
= |J|\left(\frac{M^2\phi^*(h)}{\varepsilon}-\varepsilon\right) +N\left(\frac{M^2\phi^*(h)}{\varepsilon}+\delta\right)+\frac{M^2\Theta_0^2}{\varepsilon},
    \end{aligned}
\end{equation*}

and then we get
$$\frac{1}{N}\sum\limits_{i=1}^{N}f_i(x^k)-\min\limits_{x \in Q}\frac{1}{N}\sum\limits_{i=1}^{N}f_i(x) \leq \frac{|J|}{N}\left(\frac{M^2\phi^*(h)}{\varepsilon}-\varepsilon\right) +\left(\frac{M^2\phi^*(h)}{\varepsilon}+\delta\right)+ \frac{M^2\Theta_0^2}{N\varepsilon}.$$

Recall that $\phi^*(h)=\frac{h^2M^2}{2}$, so
\begin{equation*}
    \begin{aligned}
    \frac{1}{N}\sum\limits_{i=1}^{N}f_i(x^k)-\min\limits_{x \in Q} \frac{1}{N}\sum\limits_{i=1}^{N}f_i(x) &\leq \frac{|J|}{N}\left(\frac{hM^2}{2}-\varepsilon\right) +\left(\frac{hM^2}{2}+\delta\right)+
\frac{M^2\Theta_0^2}{N\varepsilon} \\&
=\frac{|J|}{N}\left(\frac{\varepsilon}{2}-\varepsilon\right) +\left(\frac{\varepsilon}{2}+\delta\right)+
\frac{M^2\Theta_0^2}{N\varepsilon}.
    \end{aligned}
\end{equation*}

and by virtue of \eqref{accuracy_online}, we get
\begin{equation}\label{ggg}
    \frac{1}{N}\sum\limits_{i=1}^{N}f_i(x^k)-\min\limits_{x \in Q} \frac{1}{N}\sum\limits_{i=1}^{N}f_i(x) \leq \kappa.
\end{equation}

Assuming the non-negativity of the regret (i.e. the left side in \eqref{ggg}):
\begin{equation*}
    \begin{aligned}
    0\leq\sum\limits_{i=1}^{N}f_i(x^k)-f_i(x^*) &\leq|J|\left(\frac{hM^2}{2}-\varepsilon\right) +N\left(\frac{hM^2}{2}+\delta\right)+
\frac{M^2\Theta_0^2}{\varepsilon}\\&
=|J|\left(-\frac{\varepsilon}{2}\right) +N\left(\frac{\varepsilon}{2}+\delta\right)+\frac{M^2\Theta_0^2}{\varepsilon},
    \end{aligned}
\end{equation*}

so
$$
    |J|\leq N\left(1+\frac{2\delta}{\varepsilon}\right)+\frac{2M^2\Theta_0^2}{\varepsilon^2}.
$$

Suppose $\kappa\sim \varepsilon\sim \delta = \frac{C}{\sqrt{N}},$ for some $C>0$, then  we get
$$
    |J|\sim O(N) = N \left(3+\frac{2M^2\Theta_0^2}{C^2}\right).
$$
It means that the considered method is optimal for OCO, according to \cite{article:hazan_beyond_2014}.

\end{document}